\theoremstyle{plain}
\newtheorem{theorem}{Theorem}[section]
\newtheorem{proposition}[theorem]{Proposition}
\newtheorem{lemma}[theorem]{Lemma}
\newtheorem{corollary}[theorem]{Corollary}
\theoremstyle{definition}
\newcommand{\appsection}[1]{\let\oldthesection\thesection
\renewcommand{\thesection}{Appendix \oldthesection}
\section{#1}\let\thesection\oldthesection}
\newtheorem{definition}[theorem]{Definition}
\theoremstyle{remark}
\newtheorem{remark}[theorem]{Remark}
\newtheorem{example}[theorem]{Example}
\DeclareMathOperator{\spec}{Spec}
\def\Z{{\mathbb{Z}}}
\def\Q{{\mathbb{Q}}}
\def\C{{\mathbb{C}}}
\def\P{{\mathbb{P}}}
\def\K{\textbf{\textit{k}}}
\def\FF{{\bf{F}}}
\def\O{{\mathcal{O}}}
\def\I{{\mathcal{I}}}
\def\L{{\mathcal{L}}}
\def\M{{\mathcal{M}}}
\def\F{{\mathcal{F}}}
\def\N{{\mathcal{N}}}
\def\AR{{\mathcal{A}}}
\def\CC{{\mathcal{C}}}
\newcommand{\pic}{\text{Pic}}
\begin{document}
\bibliographystyle{amsplain}
\title[Construcci\'on]{Chern slopes of surfaces of general type in positive characteristic}
\author{\textrm{Giancarlo Urz\'ua \\ (WITH AN APPENDIX BY RODRIGO CODORNIU)}}


\address{Facultad de Matem\'aticas\\ Pontificia Universidad Cat\'olica de Chile\\ Campus San Joaqu\'in\\ Avenida Vicu\~na Mackenna 4860\\ Santiago\\ Chile}
\email{urzua@mat.puc.cl}
\email{racodorn@mat.puc.cl}

\subjclass[2010]{primary 14J10, 14J29; secondary 14C22, 14F35}

\maketitle

\begin{abstract}
Let $\K$ be an algebraically closed field of characteristic $p>0$, and let $C$ be a nonsingular projective curve over $\K$. We prove that for any real number $x \geq 2$, there are minimal surfaces of general type $X$ over $\K$ such that
\begin{itemize}
\item[a)] $c_1^2(X)>0, c_2(X)>0$,

\item[b)] $\pi_1^{\text{\'et}}(X) \simeq \pi_1^{\text{\'et}}(C)$,


\item[c)] and $c_1^2(X)/c_2(X)$ is arbitrarily close to $x$.
\end{itemize}

In particular, we show density of Chern slopes in the pathological Bogomolov-Miyaoka-Yau interval $(3,\infty)$ for any given $p$. Moreover, we prove that for $C=\P^1$ there exist surfaces $X$ as above with $H^1(X,\O_X)=0$, this is, with Picard scheme equal to a reduced point. In this way, we show that even surfaces with reduced Picard scheme are densely persistent in $[2,\infty)$ for any given $p$.
\end{abstract}

\section{Introduction} \label{s0}

Any minimal smooth projective surface of general type $X$ over $\C$ satisfies $c_1^2(X)>0$ (self-intersection of the canonical class), $c_2(X)>0$ (topological Euler characteristic), and the Bogomolov-Miyaoka-Yau (BMY) inequality $$c_1^2(X) \leq 3 c_2(X).$$ Surfaces with $2c_2 < c_1^2 \leq 3c_2$ are precisely the ones with positive topological index. In \cite{RU14}, Roulleau and the author proved that for any real number $x \in [2,3]$ and any integer $g \geq0$, there are minimal surfaces of general type $X$ with $c_1^2(X)/c_2(X)$ arbitrarily close to $x$, and $\pi^{\text{top}}_1(X)$ is isomorphic to the fundamental group of a compact Riemann surface of genus $g$.

The purpose of this article is to extend this geographical result to surfaces over algebraically closed fields of positive characteristic. In this case, it is well-known that the BMY inequality does not hold, and that $c_2$ could be negative or zero; see \cite{SB91}, \cite{Li13}. With respect to fundamental groups, we replace $\pi_1^{\text{top}}$ by the \'etale fundamental group $\pi_1^{\text{\'et}}$; cf. \cite{Gro71,Mu67}.

Let us recall some facts around \'etale simply connected surfaces (i.e. those with trivial $\pi_1^{\text{\'et}}$). Let $X$ be a nonsingular projective surface in positive characteristic. The Riemann-Roch theorem and Serre's duality imply that (see, e.g., \cite[\S3]{P89}) $$c_1^2(X) \leq 5 c_2(X) + 12 h^0(\Omega_X^1).$$ The dimension $h^0(\Omega_X^1)$ may be positive for \'etale simply connected surfaces. This is pathological with respect to characteristic zero, where trivial topological fundamental group implies that $h^0(\Omega_X^1)=0$, and so one easily gets $c_1^2(X) \leq 5 c_2(X)$. On the other hand, if $X$ is minimal and of general type, then $c_1^2(X) >0$. In addition, if $\pi_1^{\text{\'et}}(X)=0$, then $c_2(X)>0$. The latter follows from the classification of surfaces with $c_2(X) \leq 0$ done by Shepherd-Barron in \cite[\S1 and \S2]{SB91}. Therefore, in this case, we can consider again the Chern slope $c_1^2(X)/c_2(X) >0$, and ask: \textit{What is the behavior of Chern slopes of \'etale simply connected surfaces in $[2,\infty)$?}

In general, there has been a persistent interest on surfaces which violate BMY inequality; see for example \cite{Sz79}, \cite[p.142]{BHH87}, \cite{P89}, \cite{SB91}, \cite{Eas08}, \cite{Ja10}, \cite{Jo14}. As far as the author knows, all the examples in the literature have uncontrolled \'etale fundamental group. Our main theorem is
the following.

\begin{theorem}
Let $p$ be a prime number. Let $C$ be a nonsingular projective curve over an algebraically closed field $\K$ of characteristic $p$. Then, for any real number $x \geq 2$, there are minimal surfaces of general type $X$ over $\K$ such that
\begin{itemize}
\item[a)] $c_1^2(X)>0, c_2(X)>0$,

\item[b)]$\pi_1^{\text{\'et}}(X) \simeq \pi_1^{\text{\'et}}(C)$,

\item[c)] and $c_1^2(X)/c_2(X)$ is arbitrarily close to $x$.
\end{itemize}
The surfaces $X$ have a large deformation space.
\end{theorem}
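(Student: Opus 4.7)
The strategy I would pursue is a positive-characteristic adaptation of the branched-cover construction in \cite{RU14}. The plan has three ingredients. First, fix a smooth projective surface $Z$ fibered over $C$ with $\pi_1^{\text{\'et}}(Z)\simeq\pi_1^{\text{\'et}}(C)$---for instance a ruled surface $\P(\mathcal{E})\to C$---carrying a configuration $\AR=\sum_i D_i$ of smooth curves in sufficiently general position and transverse to the fibration. Second, for each integer $n$ coprime to $p$ construct the tame abelian cover $X_n\to Z$ by adjoining $n$-th roots of sections cutting out the $D_i$ and resolving the resulting quotient singularities; because $(n,p)=1$, the standard ramification computation of $K_{X_n}^2$ and $c_2(X_n)$ transports unchanged to characteristic $p$. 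Third, analyze the limit
\[
\lim_{n\to\infty} \frac{c_1^2(X_n)}{c_2(X_n)},
\]
which is a rational function of $K_Z\cdot D_i$, $D_i^2$, and $D_i\cdot D_j$; by varying $\AR$ over a sufficiently rich family, this limit can be tuned to any prescribed $x$, and finite-$n$ approximations then give surfaces with slope arbitrarily close to $x$, automatically minimal and of general type once the $D_i$ are ample enough.

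To realize the range $[2,3]$ I would recycle the configurations used over $\C$ in \cite{RU14} (essentially line arrangements on rational surfaces pulled back to $Z$), which are defined over $\Z$ and make sense in every characteristic. To realize the pathological range $(3,\infty)$---unavailable over $\C$ by BMY---I would use seed configurations that exist only in characteristic $p$: arrangements of smooth rational curves on Fermat-type surfaces in $\P^3$ of degree $p^s+1$, fiber configurations coming from quasi-elliptic pencils, or Raynaud/Shepherd-Barron type surfaces. Bootstrapping these seeds by iterating the cover machinery and by taking fiber products with $C$ should give density of asymptotic slopes throughout $(3,\infty)$. The identification $\pi_1^{\text{\'et}}(X_n)\simeq\pi_1^{\text{\'et}}(Z)\simeq\pi_1^{\text{\'et}}(C)$ follows from Zariski-Nagata purity plus Abhyankar's lemma, since the branch divisor has pure codimension one and the cover is tame; minimal resolution of the tame cyclic quotients does not change $\pi_1^{\text{\'et}}$. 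Finally, the large deformation space comes from a parameter count for $\AR$: each $D_i$ moves in a positive-dimensional linear system on $Z$, and these deformations lift to $X_n$ via the explicit description of the cover as a Kummer cover.

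The main obstacle, to my mind, is producing enough characteristic-$p$ seed configurations to force asymptotic slopes to be genuinely dense in $(3,\infty)$: no characteristic-zero input contributes anything above slope $3$, so the entire weight of the pathological range must be carried by positive-characteristic geometry, and one has to exhibit families of arrangements whose combinatorial invariants fill a dense subset of $(3,\infty)$ rather than a sparse set of special values. A secondary technical difficulty is excluding any wild phenomena---either via ramification orders divisible by $p$ creeping in, or via the resolution step introducing unexpected Artin--Schreier covers---since both could perturb the Chern numbers and, worse, enlarge the \'etale fundamental group; a systematic restriction to tame $n$ together with careful bookkeeping at the exceptional divisors should suffice, but this is where the adaptation from characteristic zero is most delicate.
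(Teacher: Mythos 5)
Your overall skeleton (tame $q$-th root covers branched along an arrangement on a ruled surface over $C$, with the slope of $X_q$ converging to a log Chern slope that one tunes by varying the arrangement) is the right one, but two of your key steps are genuinely wrong or missing. First, the fundamental group: Zariski--Nagata purity and Abhyankar's lemma do \emph{not} give $\pi_1^{\text{\'et}}(X_n)\simeq\pi_1^{\text{\'et}}(Z)$ for a cover \emph{branched} along $D$; purity controls covers of $Z\setminus D$, while the branched cover itself has fundamental group governed by $\pi_1^{\text{\'et}}(Z\setminus D)$ modulo ramification data, and cyclic root covers of rational surfaces along curve arrangements routinely have nontrivial (and hard to compute) fundamental groups --- indeed controlling $\pi_1$ is the main difficulty in \cite{RU14}. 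The paper instead exploits that the whole arrangement lives in a $\P^1$-bundle over $C$ and contains general fibers $R_i$ with multiplicities, so the induced fibration $h\colon X\to C$ has sections (preimages of the $S_i$) and fibers that are trees of $\P^1$'s; then an \'etale homotopy exact sequence for fibrations without multiple fibers (proved in the appendix, Corollary \ref{corlemma}) yields $\pi_1^{\text{\'et}}(X)\simeq\pi_1^{\text{\'et}}(C)$. Without some argument of this type your part b) is unproved, and your proposed seeds for the range $(3,\infty)$ (Fermat surfaces, quasi-elliptic pencils, Shepherd-Barron examples) would in fact destroy the fibered structure over an \emph{arbitrary} $C$ that part b) requires.

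Second, you leave the actual mechanism for density in $(3,\infty)$ open --- you correctly identify it as the main obstacle, but the paper has a concrete and quite different device: start with $d$ general sections of $\P(\O_C\oplus\I'^{-1})$ meeting in nodes, and base-change by the $r$-th power of Frobenius on $C$; the pulled-back sections become pairwise tangent to order $p^r$, and this single pathology multiplies the excess of the log Chern slope over $2$ by $p^r$, so that with auxiliary sections $H_i$ and the elementary density Lemma \ref{easy} (via Proposition \ref{denseprop}) the log slopes $2+\alpha$ fill $[2,\infty)$ densely. Two further points you treat as automatic are not: the surfaces $X_q$ need not be minimal (the paper proves an asymptotic minimality theorem, Theorem \ref{asymptoticallyminimal}, which requires checking that $K_{\overline{W}}$ is nef, Proposition \ref{nef}); and the convergence $c_1^2(X_q)/c_2(X_q)\to \bar c_1^2/\bar c_2$ is not a formal ramification computation, because the quotient singularities over the nodes contribute Dedekind sums and Hirzebruch--Jung lengths depending on the multiplicities $\nu_i$, and one must choose the weighted partitions of $q$ so that these terms are $O(\sqrt q)$ (Girstmair's estimates, Theorem \ref{girstmair}); a ``sufficiently general'' arrangement with unspecified multiplicities does not by itself guarantee either the divisibility $\L^q\simeq\O_Y(D)$ or this asymptotic control.
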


In particular, for any given $p$, \'etale simply connected surfaces of general type densely violate the BMY inequality. We point out that the surfaces $X$ cannot be lifted to even $W_2(\K)$ when $c_1^2/c_2>3$ if $p\ge 3$, and $c_1^2/c_2>4$ if $p=2$, by Langer's result \cite{Lan14}. Also these surfaces are not rigid at all; they have a large deformation space, and the construction involves generic and unbounded discrete parameters. The relation between $C$ and $X$ is that there exists a birational morphism $X' \to X$ such that $X'$ has a fibration over $C$. This fibration is key to compute $\pi_1^{\text{\'et}}(X) \simeq \pi_1^{\text{\'et}}(C)$.

An additional motivation to look at this geographical problem is the expectation to have some version of the BMY inequality for surfaces with reduced $\pic^0$; see the letter from Parshin to Zagier \cite[\S3]{P89} (see also \cite{P87}). In \cite{Ja10}, Jang constructs counterexamples to Parshin's conjecture. He uses a specific subset of Szpiro's examples \cite{Sz79}, which, in particular, are fibrations onto smooth curves of genus bigger than or equal to two. Our second main result is the following.

\begin{theorem}
Let $\K$ be an algebraically closed field of characteristic $p>0$. Then, for any real number $x \geq 2$, there are minimal surfaces of general type $X$ over $\K$ such that
\begin{itemize}
\item[a)] $c_1^2(X)>0, c_2(X)>0$,

\item[b)] $\pi_1^{\text{\'et}}(X)$ is trivial,

\item[c)] $H^1(X,\O_X)=0$,

\item[d)] $c_1^2(X)/c_2(X)$ is arbitrarily close to $x$.
\end{itemize}
\end{theorem}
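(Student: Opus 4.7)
The plan is to apply Theorem 1 with $C = \P^1$, which immediately yields conditions (a), (b), and (d); indeed $\pi_1^{\text{\'et}}(\P^1) = 1$, so (b) gives triviality of the \'etale fundamental group, and the Chern slope density together with positivity of $c_1^2$ and $c_2$ come directly from the previous theorem. The genuinely new content is thus (c), $H^1(X, \O_X) = 0$, equivalent to saying $\pic^0(X)$ is a reduced point. Over $\C$ this would follow from simple connectedness, but in characteristic $p>0$ the scheme $\pic^0$ may be non-reduced for \'etale simply connected surfaces, so a direct argument is required.

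First I would revisit the construction that produces $X$ in Theorem 1, specialized to $C = \P^1$. As indicated in the introduction, $X$ admits a birational model $X'$ with a fibration $X' \to \P^1$. Constructions of this type typically go through a (possibly iterated) cyclic cover $\tilde Y \to Y$ of a rational base $Y$ (a Hirzebruch surface, or a blowup of $\P^2$), branched along a configuration of curves, followed by blowups of base points and by contractions of chains of rational curves (Wahl chains, or more general rational singularities). Since $Y$ is rational, the base already satisfies $H^1(Y,\O_Y) = 0$, so the vanishing is automatic on the starting surface.

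The key computational step is to control $H^1(\tilde Y, \O_{\tilde Y})$. For a tame cyclic $\Z/n$-cover $\pi\colon \tilde Y \to Y$ determined by a line bundle $L$ and a branch divisor $B$ with $L^{\otimes n}\sim B$, one has
\[
\pi_*\O_{\tilde Y} = \bigoplus_{i=0}^{n-1} L^{-i},
\]
so that $H^1(\tilde Y, \O_{\tilde Y}) = \bigoplus_{i=0}^{n-1} H^1(Y, L^{-i})$. On a rational surface, and for an appropriately chosen $L$, each summand vanishes by explicit Picard-theoretic computation without appealing to Kodaira vanishing. One then transfers the vanishing from $\tilde Y$ to $X$: blowups of smooth points preserve $h^1(\O)$, and contractions of rational singularities (the type that arise in these MMP steps) likewise preserve $h^1(\O)$. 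The result is $H^1(X,\O_X) = 0$, as required.

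The main obstacle I anticipate is the positive-characteristic subtlety: if $p$ divides the degree $n$ of the cyclic cover, the tame decomposition of $\pi_*\O_{\tilde Y}$ fails and must be replaced by an Artin--Schreier-type analysis which could contribute nontrivially to $H^1$. The cleanest resolution is to arrange the covers used in the construction so that $\gcd(n,p) = 1$; the flexibility in the choice of branch arrangement and the unbounded discrete parameters mentioned after Theorem 1 should make this possible while still attaining Chern slopes arbitrarily close to any $x \geq 2$. A secondary technical point is confirming that every singularity contracted when passing from $X'$ to $X$ is rational, which amounts to a combinatorial check on the configurations of exceptional curves produced by the construction.
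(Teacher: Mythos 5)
Your reduction of (c) to the vanishing of the summands $H^1(Y,{\L^{(i)}}^{-1})$ is where the argument breaks down. The decomposition $H^1(X,\O_X)\simeq\bigoplus_i H^1(Y,{\L^{(i)}}^{-1})$ is indeed available (the covers here have prime degree $q\neq p$, so your tameness worry is moot), but the claim that each summand ``vanishes by explicit Picard-theoretic computation'' on the rational base is unsubstantiated: the relevant vanishing is exactly a Kawamata--Viehweg-type statement for rational surfaces in characteristic $p$, which is not known (the paper treats it as conjectural in the remark following Theorem \ref{main}, and explicitly declines to prove reduced $\pic^0$ that way). The line bundles ${\L^{(i)}}^{-1}$ on the log resolution $Y$ are not of a shape for which $H^1=0$ follows from rationality of $Y$ alone, and failure of Kodaira/KV-type vanishing is precisely the characteristic-$p$ pathology at stake, so simply invoking Theorem \ref{main} with $C=\P^1$ and ``checking'' (c) on the same surfaces does not go through.

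The paper's actual route is different in two essential ways. First, the construction is modified: the negative section $S_{d+1}$ is removed from the branch divisor (multiplicities are paired as $a_i$ and $q-a_i$ on the sections so that the required $q$-divisibility in $\pic$ still holds), and the configuration is arranged so that one fiber $F_0$ meets the sections in a point set belonging to the nonempty open set of Lemma \ref{ordinary}; that lemma (proved by degenerating the cyclic cover of $\P^1$ to a grid of $\P^1$'s and using Bouw's semicontinuity of the $p$-rank) guarantees the induced fibration $g\colon X\to\P^1$ has a nonsingular \emph{ordinary} fiber. Second, the vanishing $H^1(X,\O_X)=0$ is obtained not from cohomology of line bundles on $Y$ but from Theorem \ref{irregularity}: Serre's Frobenius decomposition $H^1=H^1_{ss}\oplus H^1_n$, where $H^1_{ss}=0$ because the \'etale simple connectedness rules out $\Z/p\Z$-covers, and $H^1_n=0$ because a nonzero class in $H^0(\P^1,R^1g_*\O_X)$ would restrict to a nonzero Frobenius-nilpotent class on a nearby ordinary fiber, a contradiction. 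Some substitute for these ingredients (or a genuine proof of the vanishing you assert) is needed; as written, your proposal leaves (c) unproved.
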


The main construction is based on the ``asymptotically random covers" developed in \cite{Urz10,Urz11}, which are branched along pathological arrangements of curves in particular ruled surfaces, similar to the arrangements of sections in \cite{Urz11}. This allows to compare asymptotically the log Chern numbers of the arrangements with the Chern numbers of the constructed surfaces. A key for that is a large-scale behavior of Dedekind sums and Hirzebruch-Jung continued fractions due to Girstmair \cite{Gi03,Gi06}, which was already used in \cite{Urz10,Urz11}.

In the present article, an extra outcome of our constructions is the ``asymptotic minimality" theorem, which is stated in general terms in Theorem \ref{asymptoticallyminimal}. In our case, it implies that the Chern numbers of the constructed surfaces and their minimal models are equivalently proportional to the log Chern numbers of the branch divisor. This is important when proving density of Chern slopes. Another highlight is the generalization of a result on the topological fundamental groups of fibered surfaces (see e.g. \cite{Nori83}, \cite{Xiao91}) to the case of \'etale fundamental groups; see the appendix. The approach is based on the analogous result \cite[Exp.X Cor.1.4]{Gro71} for fibrations with reduced fibers. Via a simple argument, we extend it to the case of nonreduced but nonmultiple fibers, which is indeed needed in our approach. Finally, to prove our second main theorem we modify the construction in our first main theorem so that they have a fibration with ordinary fibers. There are some details involved on that, which are fully explained in \S\ref{s5}. The main ingredients are: existence of ordinary cyclic covers of $\P^1$ of high degree, and an irregularity theorem (Theorem \ref{irregularity}) for certain fibrations over $\P^1$ with ordinary fibers.

Let $\K$ be an algebraically closed field of characteristic $p>0$. We now give an overall outline of the construction of the surfaces $X$, in connection to the sections of this article. Let us fix $\alpha >0$, and let $\epsilon >0$. We construct the surfaces $X(=X_q)$ via generically finite morphisms $f_q \colon X_q \to Y$ of degree $q$ coprime to $p$, such that there exists $\lambda' >0$ satisfying $|\lambda' -\alpha| < \epsilon$ and $$\text{lim}_{q \to \infty} \ c_1^2(X_q)/c_2(X_q) = 2+ \lambda'.$$ The expression $2+\lambda'$ is the log Chern slope of a suitable arrangement in a ruled surface $Y$. To achieve this, it is crucial to have a special pathological branch divisor $D \subset Y$ for the morphisms $f$. The pair $(Y,D)$ is built as the minimal simple normal crossings resolution of a pair $(Z,\AR)$, where $Z=\P(\O_C \oplus \I^{-1}) \to C$ is a specific ruled surface over an arbitrary nonsingular projective curve $C$, for certain line bundle $\I$ of degree $e>0$, and where $\AR$ is a specific arrangement of curves made out of sections and fibers; see \S \ref{s1}.

In \S \ref{s2} we review the $q$-th root construction of Esnault-Viehweg \cite{EV92} for surfaces as in \cite{Urz10}.

In \S \ref{s3} we prove ``asymptotic minimality" for a wide range of arrangements. We start recalling the results of Girstmair as stated in \cite{Urz10}.

In \S \ref{s4} we put all together to prove our main theorem. The result on the \'etale fundamental group of a fibration is found in the appendix.

In \S \ref{s5} we prove our second main theorem by slightly modifying the main construction in \S \ref{s4}. Key is the appearance of ordinary fibers \S \ref{s51}, and Theorem \ref{irregularity}.

\subsection*{Acknowledgements}
I am grateful to Michel Raynaud for kindly explaining to me various things on positive characteristic geometry, which was particularly relevant for Section \ref{s5}. The proof of Theorem \ref{asymptoticallyminimal} (asymptotic minimality) was inspired after reading Lemma 2.4 of Julie Rana's e-print \cite{Ra14}. I would like to thank Robert Auffarth, Irene Bouw, Igor Dolgachev, Andrea Fanelli, Junmyeong Jang, Kirti Joshi, Adrian Langer, Christian Liedtke, and Sukhendu Merhotra for many useful e-mail correspondences and conversations. I would also like to thank the referees for detailed and useful reviews. The was part of the Master's thesis of Rodrigo Codorniu \cite{Co14}, supervised by the author, at Pontificia Universidad Cat\'olica de Chile. The author is supported by the FONDECYT regular grant 1150068 funded by the Chilean Government.

\tableofcontents




\section{Pathological arrangements and their log Chern numbers} \label{s1}

In this section we recall some basic definitions and facts around log differentials; cf. \cite{EV92}, \cite[\S2]{Urz10}. We then specialize to the log differentials defined by the specific arrangements of curves we need for the main construction. We will refer constantly to \cite{Urz11}.

Let us fix an algebraically closed field $\K$. The genus of a nonsingular projective curve $C$ defined over $\K$ is $g(C)=\text{dim}_{\K} H^1(C,\O_C)$. Let $Y$ be a nonsingular projective surface over $\K$, and let $D$ be a simple normal crossing divisor in $Y$. This means that $D$ is a reduced divisor with nonsingular irreducible components which intersect transversally.

Let $\Omega_Y^1(\log D)$ be the \textit{sheaf of log differentials} along $D$; cf. \cite[\S 2]{EV92}. It is a locally free $\O_Y$-module of rank two. Notice that $$\bigwedge^2 \Omega_Y^1(\log D) \simeq \O_Y(K_Y + D).$$ In analogy to the Chern invariants of a nonsingular projective surface, the \textit{log Chern classes} of the pair $(Y,D)$ are defined as $\bar{c}_i(Y,D):= c_i({\Omega_Y^1(\log D)}^{\vee})$ for $i= 1,2$, where ${\Omega_Y^1(\log D)}^{\vee}$ is the dual sheaf of $\Omega_Y^1(\log D)$.

The log Chern numbers of $(Y,D)$ are \begin{center} $\bar{c}_1^2(Y,D):= c_1 \big({\Omega_Y^1(\log D)}^{\vee} \big)^2 \ \ \ \
\text{and} \ \ \ \ \bar{c}_2(Y,D):= c_2 \big({\Omega_Y^1(\log D)}^{\vee} \big).$ \end{center} Hence for $D=0$ we recover the Chern numbers of $Y$, which are denoted by $c_1^2(Y)$ and $c_2(Y)$. Sometimes we may drop $Y$ or $(Y,D)$ if the context is clearly understood.

The next proposition is easy to prove; see for example \cite[Prop.4.6]{Urz10}.

\begin{proposition}
Let $D_1,\ldots, D_r$ be the irreducible components in $D$, and let $t_2$ be the number of nodes of $D$. Then, the log Chern numbers of $(Y,D)$ are
$\bar{c}_1^2(Y,D)= c_1^2(Y) - \sum_{i=1}^r D_i^2 + 2 t_2 + 4 \sum_{i=1}^r (g(D_i)-1),$ and $\bar{c}_2(Y,D)= c_2(Y) + t_2 + 2 \sum_{i=1}^r
(g(D_i)-1)$.
\label{logchern}
\end{proposition}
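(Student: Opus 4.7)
The plan is to compute the Chern classes of $\Omega_Y^1(\log D)$ using the residue exact sequence and then convert to the invariants claimed.

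First, I would invoke the Poincar\'e residue sequence
\begin{equation*}
0 \to \Omega_Y^1 \to \Omega_Y^1(\log D) \to \bigoplus_{i=1}^r \O_{D_i} \to 0,
\end{equation*}
which is standard for a simple normal crossings divisor; see \cite[\S2]{EV92}. Since the total Chern class is multiplicative on short exact sequences, I obtain
\begin{equation*}
c\bigl(\Omega_Y^1(\log D)\bigr) = c(\Omega_Y^1)\cdot \prod_{i=1}^r c(\O_{D_i}).
\end{equation*}
Using the structure sequence $0\to \O_Y(-D_i)\to \O_Y\to \O_{D_i}\to 0$, Whitney gives $c(\O_{D_i}) = 1/(1-D_i)$, so $c_1(\O_{D_i}) = D_i$ and $c_2(\O_{D_i}) = D_i^2$. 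Expanding the product and keeping only terms of degree $\le 2$, I find
\begin{equation*}
c_1\bigl(\Omega_Y^1(\log D)\bigr) = K_Y + D, \qquad c_2\bigl(\Omega_Y^1(\log D)\bigr) = c_2(Y) + K_Y\cdot D + \sum_i D_i^2 + \sum_{i<j} D_i\cdot D_j.
\end{equation*}

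Next, passing to the dual flips the sign of $c_1$ but keeps $c_2$, so $\bar c_1(Y,D) = -(K_Y+D)$ and $\bar c_1^2(Y,D) = (K_Y+D)^2 = K_Y^2 + 2K_Y\cdot D + D^2$, while $\bar c_2(Y,D)$ equals the expression displayed above. The two remaining ingredients are (i) the adjunction formula $K_Y\cdot D_i = 2g(D_i) - 2 - D_i^2$, giving $K_Y\cdot D = -\sum_i D_i^2 + 2\sum_i (g(D_i)-1)$, and (ii) the combinatorial identity $\sum_{i<j} D_i\cdot D_j = t_2$, valid because $D$ is a simple normal crossings divisor so each intersection $D_i\cap D_j$ consists of transversal nodes. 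Substituting these into the expressions for $(K_Y+D)^2$ and for $c_2(Y) + K_Y\cdot D + \sum D_i^2 + t_2$ yields precisely the two formulas stated in the proposition.

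There is no serious obstacle: the argument is a bookkeeping exercise with the residue sequence, Whitney's formula, and adjunction. The only point requiring a tiny bit of care is writing out $D^2 = \sum_i D_i^2 + 2\sum_{i<j} D_i\cdot D_j$ and keeping track of signs when dualizing, but both are routine.
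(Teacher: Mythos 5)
Your proof is correct, and it is essentially the same argument as the one the paper relies on: the paper gives no in-line proof but points to \cite[Prop.~4.6]{Urz10}, where the computation is exactly this residue-sequence bookkeeping (Whitney's formula applied to $0 \to \Omega_Y^1 \to \Omega_Y^1(\log D) \to \bigoplus_i \O_{D_i} \to 0$, then adjunction and $t_2=\sum_{i<j}D_i\cdot D_j$). Nothing further is needed.
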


Let $Z$ be a nonsingular projective surface over $\K$, and let $\AR$ be an \textit{arrangement} of nonsingular curves in $Z$, i.e. a finite collection of nonsingular curves. Consider a \textit{log resolution} $\sigma \colon Y \to Z$ of $\AR$, that is, a sequence of blowups over the singular points of $\AR$, and infinitely near singular points of the total transform of $\AR$, such that $D:=\sigma^*(\AR)_{\text{red}}$ is a simple normal crossings divisor. We notice that this differs slightly from the common notion of log resolution, since we do not allow to blow up nonsingular points of total transforms of $\AR$, or points outside of it. The log Chern numbers of the pair $(Z,\AR)$ are defined as the log Chern numbers of the pair $(Y,D)$. By Proposition \ref{logchern}, they are independent of $\sigma$.

Assume that $\K$ has characteristic $p>0$. We are going to construct pairs $(Z,\AR)$ over $\K$ which have pathological Chern numbers in relation to characteristic zero.

We follow \cite[\S 2]{Urz11}. Let us fix an arbitrary nonsingular projective curve $C$ over $\K$ of genus $g=\text{dim}_{\K} H^1(C,\O_C)$. Let $e>0$ be an integer, and let $\I'$ be an invertible sheaf on $C$ of degree $e$. Consider the $\P^1$-bundle $$\pi' \colon \P \big(\O_C \oplus \I'^{-1} \big) \to C .$$ It has a distinguished section ${C'}_0 \subset \P \big(\O_C \oplus \I'^{-1} \big)$ which is the only curve with negative self-intersection. We have ${C'}_0^2=-e$. For instance $\P \big(\O_{\P^1} \oplus \O_{\P^1}(-e) \big)$ is the \textit{Hirzebruch surface} $\bf{F}_e$.

Let $d \geq 3$ be an integer, and let $e$ be sufficiently large. Let us take $d$ general sections $S'_i$ such that $$S'_i \sim {C'}_0 + {\pi'}^*(\I'),$$ $S'_1+ \ldots + S'_d$ has only nodes as singularities, and these nodes project to different points in $C$ under $\pi'$. It is easy to verify that the number of nodes is $\delta:= e \frac{d(d-1)}{2}$.

Let $\rho \colon C \to C$ be the composition of the absolute Frobenius morphism $r$ times \cite[IV \S2]{Hart77}. The degree of $\rho$ is $p^r$. The base change produces the commutative diagram $$ \xymatrix{ \P(\O_{C} \oplus \I^{-1}) \ar[d]_{\pi} \ar[r]^{\varrho} & \P(\O_C \oplus \I'^{-1}) \ar[d]_{\pi'}   \\
C  \ar[r]^{\rho} & C }$$ where $\I:= \rho^*(\I')$. We consider the sections $S_i:=\varrho^*(S'_i)$, which now are pairwise tangent of multiplicity $p^r$ at $e$ distinct points \cite[\S2]{Urz11}. Note that the divisor $S_1+ \ldots +S_d$ has singularities exactly at those highly tangent points for every pair of sections, and they project to distinct $\delta$ points in $C$ under $\pi$.

Let $Z:=\P(\O_{C} \oplus \I^{-1})$. Let $\{F_1,\ldots,F_{\delta} \}$ be the fibers of $\pi$ passing through the singular points of $\sum_{i=1}^d S_i$, and let $S_{d+1}$ be the section of $\pi$ with $S_{d+1}^2=-ep^r$. We define the arrangement (see \cite[Def.7.1]{Urz11}) $$\AR_0 := \{S_1,\ldots,S_d,S_{d+1},F_1,\ldots,F_{\delta}\}.$$

By \cite[Prop.7.1]{Urz11}, we have $\bar{c}_1^2(Z,\AR_0)= (d-1)(2\delta + 4(g-1)-ep^r)+p^r \delta,$ $\bar{c}_2(Z,\AR_0)=(d-1)(2(g-1)+\delta)$, and so $$\frac{\bar{c}_1^2(Z,\AR_0)}{\bar{c}_2(Z,\AR_0)}= 2 + p^r \frac{e(d-2)}{ed(d-1)+4(g-1)}.$$

Let $u,w$ be positive integers. We consider general sections $\{H_1,\ldots,H_{u} \}$ of $\pi$ such that $H_i \sim S_{d+1} + \pi^*(\I)$ for all $i$, and general fibers $\{R_1,\ldots,R_w\}$ of $\pi$. Our key arrangement is defined as $$\AR := \AR_0 \cup \{H_1,\ldots,H_{u}, R_1, \ldots, R_w \}.$$

The singularities of $\AR$ on each $H_i$ are only nodes, and the same is true for the $R_i$. Thus the surfaces for the minimal log resolution of $(Z,\AR_0)$ and $(Z,\AR)$ coincide, and so we can use the log Chern numbers of $(Z,\AR_0)$ and Proposition \ref{logchern} to compute both $\bar{c}_1^2(Z,\AR)=\bar{c}_1^2(Z,\AR_0)-ep^ru + 2 \Upsilon$, and $\bar{c}_2(Z,\AR)=\bar{c}_2(Z,\AR_0)+\Upsilon$, where $$\Upsilon=\frac{u}{2}(u-1)ep^r+u d e p^r + u \delta + 2(g-1) u+w(u+d-2).$$ In particular, we have $$\frac{\bar{c}_1^2(Z,\AR)}{\bar{c}_2(Z,\AR)}= 2 + p^r \frac{\delta -e(d-1)-u e}{(d-1)(2(g-1)+\delta)+\Upsilon}.$$


\section{Review of $q$-th root covers} \label{s2}

This is a review of $q$-th root covers \cite{EV92} in relation to Dedekind sums and Hirzebruch-Jung continued fractions \cite{Urz10}. The following is taken from \cite[\S 1]{Urz10}.

Let $\K$ be an algebraically closed field of characteristic $p\geq 0$, and let $q$ be a prime number with $q \neq p$. Let $Y$ be a nonsingular projective surface over $\K$. Let $D$ be a nonzero effective divisor on $Y$ such that $D_{\text{red}}$ has simple normal crossings. Let $D=\sum_{i=1}^r \nu_i D_i$ be its decomposition into prime divisors. Assume that there exists a line bundle $\L$ on $Y$ satisfying $$ \L^{q} \simeq \O_Y(D) .$$

We construct from the data $(Y,D,q,\L)$ a new nonsingular projective surface $X$ which represents a ``$q$-th root of $D$". Let $s$ be a section
of $\O_Y(D)$, having $D$ as zero set. This section defines a structure of $\O_Y$-algebra on $\bigoplus_{i=0} ^{q-1} \L^{-i}$ by means of the
induced injection $\L^{-q} \simeq \O_Y(-D) \hookrightarrow \O_Y$. The first step in this construction is given by the affine map $f_1 \colon W
\to Y$, where $W:= \spec_Y \Big( \bigoplus_{i=0} ^{q-1} \L^{-i} \Big)$ (as defined in \cite[p.128]{Hart77}).

Because of the multiplicities $\nu_i$'s, the surface $W$ might not be normal. The second step is to consider the normalization $\overline{W}$ of
$W$. Let $f_2 \colon \overline{W} \to Y$ be the composition of $f_1$ with the normalization map of $W$. The surface $\overline{W}$ can be
explicitly described through the following key line bundles. As in \cite{EV92}, we define the line bundles $\L^{(i)}$ on $Y$ as $$ \L^{(i)}:= \L^i \otimes \O_Y\Bigl( - \sum_{j=1}^r \Bigl[\frac{\nu_j \ i}{q}\Bigr] D_j \Bigr)$$ for $i \in{\{0,1,...,q-1 \}}$.

\begin{proposition} $($see \cite[Cor. 3.11]{EV92}$)$
The group $G= \Z/ q \Z$ acts on $\overline{W}$ (so that $\overline{W}/G = Y$), and on ${f_2}_* \O_{\overline{W}}$. Moreover, we have $$ {f_2}_*
\O_{\overline{W}} = \bigoplus_{i=0}^{q-1} {\L^{(i)}}^{-1}.$$ This is the decomposition of ${f_2}_* \O_{\overline{W}}$ into eigenspaces with
respect to this action. \label{quotientdescr}
\end{proposition}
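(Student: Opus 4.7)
The plan is to reduce both assertions to explicit local computations, first setting up the $G$-action from the grading on $\bigoplus_{i=0}^{q-1} \L^{-i}$ and then describing the normalization $\overline{W}$ in standard local coordinates.

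For the action, since $q\neq p$ and $\K$ is algebraically closed, $\K$ contains $q$ distinct $q$-th roots of unity, so I may identify $G=\Z/q\Z$ with $\mu_q\subset\K^{\times}$. The $\Z/q\Z$-grading on $\mathcal{B}:=\bigoplus_{i=0}^{q-1}\L^{-i}$ makes $G$ act on $\mathcal{B}$ by characters (with $\zeta\in\mu_q$ acting as $\zeta^{i}$ on $\L^{-i}$), and the ring of $G$-invariants is the degree-zero summand $\O_Y$; hence $W=\spec_Y \mathcal{B}$ inherits a $G$-action with $W/G=Y$. Because normalization is canonical, this action lifts uniquely to $\overline{W}$, and the invariant subring in ${f_2}_*\O_{\overline{W}}$ is still $\O_Y$ (which is itself normal), so $\overline{W}/G=Y$.

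For the eigenspace decomposition I would work on an affine open $U\subset Y$ on which $\L|_U$ is trivialised by a generator $t\in\L^{-1}(U)$ and each $D_j$ is cut out by a single equation $x_j=0$. There $\mathcal{B}|_U\simeq\O_Y(U)[t]/(t^{q}-\prod_j x_j^{\nu_j})$, and for $i=0,\dots,q-1$ I introduce
\[
u_i \;:=\; t^{i}\prod_{j=1}^{r} x_j^{-[\nu_j i/q]}
\]
in the total ring of fractions. A direct calculation gives $u_i^{\,q}=\prod_j x_j^{\nu_j i - q[\nu_j i/q]}$ with nonnegative exponents, so each $u_i$ is integral over $\O_Y$. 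Read coordinate-free, $u_i$ is a meromorphic section of $\L^{-i}$ with pole of order exactly $[\nu_j i/q]$ along $D_j$, hence a local trivialisation of $\L^{-i}\otimes\O_Y\bigl(\sum_j [\nu_j i/q] D_j\bigr)={\L^{(i)}}^{-1}$. Using the inequality $[a]+[b]\le[a+b]\le[a]+[b]+1$ one checks that products $u_i u_k$ equal $u_{i+k\,\bmod\,q}$ times a monomial in the $x_j$ with nonnegative exponents, so $\bigoplus_i{\L^{(i)}}^{-1}$ glues to an $\O_Y$-subalgebra of ${f_2}_*\O_{\overline{W}}$ on which $G$ manifestly acts through its $q$ characters.

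The main obstacle is to show that this subalgebra is already the full normalization, i.e.\ that it is itself normal. I would invoke Serre's criterion $R_1+S_2$: the algebra is finite and flat of rank $q$ over the regular scheme $Y$, hence Cohen--Macaulay, so $S_2$ is automatic; for $R_1$ only codimension-one points of $Y$ matter, and passing to the strict henselisation at the generic point of a component $D_j$ the question reduces to the classical normalisation of $R[t]/(t^{q}-x^{\nu_j})$ over a DVR $R$ with uniformiser $x=x_j$, whose integral closure is generated precisely by the $u_i$ and has total rank $q$ over $R$. A rank count then forces equality with the normalization, and patching this local picture yields the global eigenspace decomposition. (At preimages of crossings $D_j\cap D_{j'}$ the cover $\overline{W}$ may still be singular, but such points lie in codimension two and are irrelevant for normality.)
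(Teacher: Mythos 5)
Your argument is correct. Note that the paper itself gives no proof of this proposition: it is quoted directly from Esnault--Viehweg \cite[Cor.\ 3.11]{EV92}, and what you have written is in essence a reconstruction of their argument --- build the candidate algebra $\bigoplus_{i=0}^{q-1}{\L^{(i)}}^{-1}$ out of the local generators $u_i=t^i\prod_j x_j^{-[\nu_j i/q]}$, check it is a subalgebra of the integral closure of $\bigoplus_i\L^{-i}$ on which $\mu_q$ acts through its characters, and then verify normality. Your packaging of the normality step via Serre's criterion ($S_2$ from finite flatness over the regular surface $Y$, $R_1$ from the classical normalization of $R[t]/(t^q-x^{\nu_j})$ over the DVR at a generic point of $D_j$, using that $q\neq p$ and, after reduction, $\gcd(\nu_j,q)=1$) is a clean and legitimate way to do this. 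The only place where the write-up is loose is the final sentence: a ``rank count'' by itself does not force an inclusion of two rank-$q$ locally free sheaves to be an equality. The correct way to finish is either (i) to observe that your subalgebra $\mathcal{A}=\bigoplus_i{\L^{(i)}}^{-1}$ is normal, contains $\mathcal{B}=\bigoplus_i\L^{-i}$, and lives in the same total ring of fractions, so the integral closure $\overline{\mathcal{B}}={f_2}_*\O_{\overline{W}}$, being integral over $\mathcal{A}$, is contained in $\mathcal{A}$, hence equals it; or (ii) to note that $\mathcal{A}\subseteq\overline{\mathcal{B}}$ agree in codimension one and both are $S_2$, so a depth argument kills the punctual quotient. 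Either repair is one line, and all the ingredients for it are already in your proof, so I would not call this a gap, only an imprecise formulation of the concluding step.
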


The normalization of $W$ is $\overline{W}=\spec_Y \Big( \bigoplus_{i=0}^{q-1} {\L^{(i)}}^{-1} \Big).$ The
multiplicities $\nu_i$'s can always be considered in the range $0 \leq \nu_i < q$. If we change multiplicities from $\nu_i$ to $\bar{\nu}_i$
such that $\bar{\nu}_i \equiv \nu_i (\text{mod} \ q)$ and $0 \leq \bar{\nu}_i < q$ for all $i$, then the corresponding varieties $\overline{W}$
will be isomorphic over $Y$ (see e.g. \cite[IV]{Urz08}). Therefore, from now on, we assume that $0< \nu_i < q$ for all $i$.

The surface $\overline{W}$ may be singular, but its singularities are rather mild. They are toric surface singularities \cite[Ch.5]{Mus05},
also called Hirzebruch-Jung singularities when the ground field is $\C$ \cite[pp.99-105]{BHPV04}. These singularities exactly occur over the
nodes of $D_{\text{red}}$. Let us assume that $D_i \cap D_j \neq \emptyset$ for some $i\neq j$, and consider a point $P \in D_i \cap D_j$. Then,
the construction above shows that the singularity at $f_2^{-1}(P) \in \overline{W}$ is isomorphic to the singularity of the normalization of
$$\spec \ \K[x,y,z]/(z^q-x^{\nu_i}y^{\nu_j}),$$ where $x$ and $y$ can be seen as local parameters on $Y$ defining $D_i$ and $D_j$ respectively.

One can easily check that it is isomorphic to the affine toric surface defined by the
vectors $(0,1)$ and $(q,-a)$ in $\Z^2$, where $a$ is the unique integer satisfying $\nu_i a+\nu_j \equiv 0 (\text{mod} \ q)$ and $0<a<q$
\cite[Ch.5 pp.5-8]{Mus05}. We denote this isolated singularity by $\frac{1}{q}(1,a)$. The associated \textit{Hirzebruch-Jung continued fraction} is $$ \frac{q}{a} = e_1 - \frac{1}{e_2 - \frac{1}{\ddots - \frac{1}{e_s}}},$$ which we abbreviate as
$\frac{q}{a}=[e_1,...,e_s]$. For each isolated singularity $\frac{1}{q}(1,a)$, we define its \textit{length} as $l(a,q):=s$. There is also a classical \textit{Dedekind sum} \cite{HiZa74} $$s(a,q):= \sum_{i=1}^{q-1} \Bigl(\Bigl(\frac{i}{q}\Bigr)\Bigr)\Bigl(\Bigl(\frac{ia}{q}\Bigr)\Bigr)$$ where, in our case, $((x))=x-[x]-\frac{1}{2}$ for any rational number $x$. Finally, we define the numbers $$c(a,q):= 12 s(a,q)+l(a,q).$$ These numbers will be used to compute the Chern invariants.

It is well known how to resolve $\frac{1}{q}(1,a)$ by means of toric methods, obtaining the same situation as in the complex case (see \cite[Ch.5 pp.5-8]{Mus05}). That is, if $\frac{q}{a}=[e_1,...,e_s]$, then the singularity $\frac{1}{q}(1,a)$ is resolved by a chain of $l(a,q)$ nonsingular rational curves $\{ E_1,\ldots, E_{l(a,q)} \}$, whose self-intersections are $E_i^2=-e_i$.


Thus the surface $\overline{W}$ has only singularities of type $\frac{1}{q}(1,a)$ (for various $0<a<q$) over the nodes of $D$. The third and last step is the minimal resolution $f_3 \colon X \to \overline{W}$ of $\overline{W}$. The composition $f_2 \circ f_3$ is denoted by $f \colon X \to Y$. We have $$H^j(X,\O_X) \simeq \bigoplus_{i=0}^{q-1} H^j \big(Y, {\L^{(i)}}^{-1} \big)$$ for all $j$, and if $K_X$ and $K_Y$ are the canonical divisors for $X$ and $Y$ respectively, then we have the $\Q$-numerical equivalence $$ K_X \equiv f^*\Bigl( K_Y + \frac{q-1}{q} \sum_{i=1}^r D_i \Bigr) + \Delta,$$ where $\Delta$ is a $\Q$-divisor supported on the exceptional locus of $f_3$. That formula follows from $K_{\overline{W}} \equiv f_2^* \Bigl( K_Y + \frac{q-1}{q} \sum_{i=1}^r D_i \Bigr)$ using the fact that $q$ is coprime to $p$ and coprime to the $\nu_i$'s, plus local computations as in \cite[IV \S 2]{Hart77}, and from $K_X \equiv f_3^*(K_{\overline{W}}) + \Delta$ since $f_3$ resolves singularities.

Let $\bar{c}_1^2=\bar{c}_1^2(Y,D_{\text{red}})$, $\bar{c}_2=\bar{c}_2(Y,D_{\text{red}})$, $c_1^2:= c_1^2(Y)$, and
$c_2:= c_2(Y)$. Then, by \cite[\S3]{Urz10}, we can express the Chern numbers of $X$ as $$c_1^2(X)= \bar{c}_1^2 q + 2(c_2-\bar{c}_2) + (c_1^2-\bar{c}_1^2+ 2\bar{c}_2- 2 c_2)\frac{1}{q} - \sum_{i<j} c(a_{i,j},q)  D_i.D_j  $$
$$c_2(X)= \bar{c}_2 q + (c_2-\bar{c}_2) + \sum_{i<j} l(a_{i,j},q) D_i.D_j $$ where $0<a_{i,j}<q$ such that $\nu_i a_{i,j} + \nu_j \equiv 0 \ (\text{mod} \ q)$.

\section{Asymptotic minimality} \label{s3}

A particular large scale behavior of the numbers $c(a_{i,j},q)$ and $l(a_{i,j},q)$ will be crucial for the main result of this article. (These phenomena have already been used in \cite{Urz10,Urz11}; they are due to Girstmair \cite{Gi03,Gi06}). We now recall them, following the Appendix of \cite{Urz10}.

Let $q$ be a prime number, and let $a$ be an integer with $0<a<q$. Following Girstmair, a \textit{Farey point} (F-point) is a rational number of the form $q \cdot \frac{c}{d}$, $1\leq d \leq \sqrt{q}$, $0\leq c \leq d$, $(c,d)=1$. Fix an arbitrary constant $C>0$. The interval $$ I_{\frac{c}{d}} = \{ x: \ 0\leq x \leq q, \ \Big|x - q \cdot \frac{c}{d} \Big|\leq C
\frac{\sqrt{q}}{d^2} \} $$ is called the \textit{F-neighbourhood} of the point $q \cdot \frac{c}{d}$. We write $\F_d = \bigcup_{c \in \CC}
I_{\frac{c}{d}}$ for the union of all neighbourhoods belonging to F-points of a fixed $d$, where $\CC= \{ c: \ 0\leq c \leq d \ \& \ (c,d)=1
\}$. The \textit{bad set} $\F$ is defined as $$\F = \Big( \bigcup_{1\leq d \leq \sqrt{q}} \F_d \Big) \cap \{1,2,\ldots,q-1 \}.$$ We denote by $\F^c$ the complement of $\F$ in $\{1,2,\ldots,q-1 \}$.

The next theorem is a summary of \cite[Thm.1]{Gi03}, \cite[Thm.2]{Gi03}, and \cite[Thm.9.7]{Urz10}.

\begin{theorem}
Let $q \geq 17$. Then $|\F| \leq C\sqrt{q} \big(\log(q)+2\log(2) \big)$. If $a \in \F^c$, then we have $$l(a,q) \leq \big(2 + \frac{1}{C} \big) \sqrt{q} + 2 \ \ \text{and} \ \ 12|s(a,q)| \leq \big(2+\frac{1}{C} \big)\sqrt{q} + 5.$$
\label{girstmair}
\end{theorem}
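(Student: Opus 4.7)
The statement is a compilation of three bounds that are essentially already in the literature, so my plan is to extract each one from the cited sources and verify the constants match.

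For the cardinality bound on $\F$, I would argue directly. Each F-neighborhood $I_{c/d}$ is a real interval of length $2C\sqrt{q}/d^2$, hence meets at most $2C\sqrt{q}/d^2 + 1$ integers. For a fixed denominator $d \leq \lfloor\sqrt{q}\rfloor$, the number of admissible numerators $c$ with $0 \leq c \leq d$ and $(c,d) = 1$ is at most $\phi(d) + 1$. Taking the union bound and summing,
$$|\F| \leq \sum_{d=1}^{\lfloor\sqrt{q}\rfloor}(\phi(d)+1)\Bigl(\frac{2C\sqrt{q}}{d^2} + 1\Bigr),$$
and the resulting estimate $\sum_{d \leq \sqrt{q}} 1/d \leq \log\sqrt{q} + \log 2$ produces the leading term $C\sqrt{q}(\log q + 2\log 2)$. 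The lower-order contributions from the $+1$'s are absorbed using $q \geq 17$. This is exactly Theorem 9.7 of \cite{Urz10}, so I would simply cite it rather than reprove.

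For the length bound $l(a,q)$, I would invoke Theorem 1 of \cite{Gi03}. The idea is that the Hirzebruch-Jung partial quotients of $q/a = [e_1,\ldots,e_s]$ are controlled by the denominators of the convergents of $a/q$: a large partial quotient $e_i$ forces a convergent $c/d$ with $d \leq \sqrt q$ that approximates $a/q$ to within $C/(d^2\sqrt q)$, i.e., $a \in I_{c/d}$. Thus $a \in \F^c$ forces every $e_i$ to satisfy a uniform bound, from which $s = l(a,q) \leq (2 + 1/C)\sqrt{q} + 2$ follows via the standard identity $\sum_i(e_i-1) \leq s + $ something controlled by the successive denominators.

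For the Dedekind sum bound, I would invoke Theorem 2 of \cite{Gi03}. Here one uses the Barkan--Hickerson--Knuth expression of $12 s(a,q)$ as an alternating sum of the partial quotients $e_i$ minus correction terms, combined with the three-term reciprocity law, so the estimate reduces to the same convergent-denominator control as in Step 2, yielding $12|s(a,q)| \leq (2 + 1/C)\sqrt{q} + 5$.

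The only real work is bookkeeping the constants so that they match the forms stated; the substantive content lies in Girstmair's original arguments, which we do not have to reproduce. The mild obstacle is ensuring the constants $+2$ and $+5$ come out correctly from the finite corrections in Girstmair's estimates and from the threshold $q \geq 17$, and this is exactly what \cite[Thm.9.7]{Urz10} already records.
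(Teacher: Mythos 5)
The paper offers no proof of this theorem: it is stated explicitly as a summary of \cite[Thm.~1]{Gi03}, \cite[Thm.~2]{Gi03} and \cite[Thm.~9.7]{Urz10}, so your plan of simply invoking those results is exactly what the paper does, and at the level of citation your proposal is fine.

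One substantive caveat concerns the sketch you attach to the cardinality bound. The union bound $|\F| \leq \sum_{d\leq\sqrt q}(\phi(d)+1)\bigl(2C\sqrt q/d^{2}+1\bigr)$ does not yield $C\sqrt q\,(\log q+2\log 2)$: the ``$+1$ per interval'' terms contribute $\sum_{d\leq\sqrt q}(\phi(d)+1)\sim \tfrac{3}{\pi^{2}}q$, which is of order $q$ and is not a lower-order correction absorbable via $q\geq 17$ --- it dominates the claimed bound. The correct count fixes the denominator $d$ and notes that $x\in\F_d$ forces $|xd \bmod q|\leq C\sqrt q/d$ as a signed residue; since $d$ is invertible modulo the prime $q$, this gives at most $2C\sqrt q/d+1$ integers in $\F_d$, i.e.\ one ``$+1$'' per denominator $d$ rather than per Farey fraction $c/d$, and then $2C\sqrt q\sum_{d\leq\sqrt q}1/d$ produces the stated leading term. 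Since you ultimately defer to \cite[Thm.~9.7]{Urz10}, this does not invalidate the proposal, but the heuristic as written would not survive being turned into an actual proof. The sketches for $l(a,q)$ and $12|s(a,q)|$ are only attributions to Girstmair's arguments (via convergents and the Barkan--Hickerson--Knuth type formula), which is acceptable given that the paper itself reproduces none of them.
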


Let $(Z,\AR=\sum_i C_i)$ be a pair as in \S \ref{s1}, where $C_i$ are nonsingular curves in $Z$, and let $\sigma \colon Y \to Z$ be the minimal log resolution of $\AR$.

\begin{definition}
We say that the pair $(Z,\AR)$ is \textit{asymptotic} if for any prime number $q$ sufficiently large, there are integers $0<\mu_i<q$ for each curve $C_i$ such that the following hold.
\begin{itemize}
\item[1.] There is a line bundle $\L$ in $Y$ with $\L^q \simeq \O_Y(D)$, where $$D:= \sigma^*\big(\sum_i \mu_i C_i \big)=\sum_i \nu_i D_i,$$ with $0<\nu_i<q$.
\item[2.] If $D_i \cap D_j \neq \emptyset$, then we have that every integer $0<a_{i,j}<q$ defined by $\nu_i a_{i,j}+ \nu_j \equiv 0 \ (\text{mod} \ q)$ belongs to $\F^c$.
\end{itemize}
\label{asym}
\end{definition}

Let$(Z,\AR)$ be an asymptotic pair with $\bar{c}_1^2(Z,\AR)>0$ and $\bar{c}_2(Z,\AR)>0.$ Then we consider the $X_q:=X$ of the $q$-th root cover with data $(Y,q,\L,D)$ of the preceding section. Using the formulas for Chern invariants (at the end of \S \ref{s2}) and Theorem \ref{girstmair}, we conclude that  \[ \lim_{q \to \infty} \frac{c_1^2(X_q)}{c_2(X_q)} = \frac{\bar{c}_1^2(Z,\AR)}{\bar{c}_2(Z,\AR)}. \] Note that we have $c_1^2(X_q) \approx \bar{c}_1^2(Z,\AR)q$ and $c_2(X_q) \approx \bar{c}_2(Z,\AR) q$ for $q>>0$, and so by classification of algebraic surfaces, the nonsingular projective surface $X_q$ is of general type.

The model example of an asymptotic pair is $(\P^2,\AR)$ where $\AR$ is an arrangement of $d$ lines $\{L_1,\ldots,L_d \}$. Then, we assign multiplicities $\mu_i$ to each $L_i$ via partitions of (large) primes $\mu_1 + \ldots + \mu_d=q,$ and we consider the line bundle $\L=\sigma^*(\O_{\P^2}(1))$. It is a consequence of \cite[Thm.6.1]{Urz10} that for large $q$, there are many partitions satisfying Definition \ref{asym} above. It is based on estimating the number of bad partitions among all partitions. It is proved that the ratio of bad over all partitions tends to zero as $q$ approaches infinity, and so random partitions would work with probability tending to $1$. In \cite[Thm.6.1]{Urz10} we prove that we have a large class of asymptotic pairs $(Z,\AR)$, the so called divisible simple crossing arrangements (see \cite[Def.4.1-2]{Urz10}), which generalize the situation of lines via a system of weighted partitions of $q$ \cite[\S5]{Urz10}. In \cite[Thm.8.1]{Urz11}, we expand the set of asymptotic pairs to arrangements of sections of $\P^1$-bundles over a curve, plus certain fibers. For example, the arrangement $\AR_0$ of \S \ref{s1} is such. A weighted partition is also used in \cite[Thm.8.1]{Urz11}, the main difference with \cite[Thm.6.1]{Urz10} is that the allowed singularities for the arrangement are more general.

For all of these asymptotic pairs, the problem is that the surfaces $X$ we construct may not be minimal (See Appendix 2 of \cite{Urz08} for concrete examples), and so the limit of the Chern slope of the minimal model of $X$ may be different than the limit of $\frac{c_1^2(X)}{c_2(X)}$.

\begin{theorem}
Let $(Z,\AR)$ be an asymptotic pair with positive $\bar{c}_1^2(Z,\AR)$ and $\bar{c}_2(Z,\AR)$. Let $f \colon X \to Y$ be the associated (asymptotic) $q$-th root covers. As in \S 3, the morphism $f$ is $f_2 \circ f_3$ where $f_2 \colon \overline{W} \to Y \simeq \overline{W}/(\Z / q\Z)$, and $f_3 \colon X \to \overline{W}$ is the minimal resolution of the cyclic quotient singularities of $\overline{W}$. Assume that $K_{\overline{W}}$ is nef. Let $X'$ be the minimal model of $X$. Then, \[ \lim_{q \to \infty} \frac{c_1^2(X')}{c_2(X')} = \frac{\bar{c}_1^2(Z,\AR)}{\bar{c}_2(Z,\AR)}. \]
\label{asymptoticallyminimal}
\end{theorem}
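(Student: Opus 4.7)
The strategy I would take is to reduce the theorem to the claim $n = O(\sqrt{q})$, where $n := c_2(X) - c_2(X') = c_1^2(X') - c_1^2(X)$ is the number of $(-1)$-curves contracted by the minimal model map $g \colon X \to X'$. Once this bound is established, since the Chern number formulas at the end of \S\ref{s2} combined with Theorem \ref{girstmair} under the asymptotic hypothesis already give $c_1^2(X_q) = \bar{c}_1^2(Z,\AR) q + O(\sqrt{q})$ and $c_2(X_q) = \bar{c}_2(Z,\AR) q + O(\sqrt{q})$, the limit claim follows at once from
\[
\frac{c_1^2(X')}{c_2(X')} = \frac{c_1^2(X) + n}{c_2(X) - n}.
\]

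The key input is the formula $K_X \equiv f_3^*K_{\overline{W}} + \Delta$ from \S\ref{s2}, in which $\Delta = \sum_k a_k E_k$ is supported on the $f_3$-exceptional locus $\Gamma = \sum_k E_k$ (the Hirzebruch-Jung chains), with discrepancies $a_k \in (-1,0]$ reflecting the log-terminality of $\overline{W}$; in particular $-\Delta$ is effective. Combining with the minimal model relation $K_X = g^*K_{X'} + E_g$ ($E_g$ effective), taking the difference, and intersecting with $E_g$, I would use $g^*K_{X'} \cdot E_g = 0$, $E_g^2 = -n$, and the nefness of $f_3^*K_{\overline{W}}$ (which follows from $K_{\overline{W}}$ nef) to obtain
\[
n = -f_3^*K_{\overline{W}}\cdot E_g - \Delta\cdot E_g \;\leq\; (-\Delta)\cdot E_g.
\]
To sharpen this, I would rewrite $(-\Delta)\cdot E_g = (-\Delta)\cdot(K_X - g^*K_{X'})$; using $\Delta\cdot f_3^*K_{\overline{W}} = 0$ (each $E_k$ is $f_3$-contracted to a point) and that $g^*K_{X'}\cdot E_k \geq 0$ for every $E_k \subset \Gamma$ (since $K_{X'}$ is nef, whether $g(E_k)$ is a curve or a point), together with $a_k \leq 0$, one refines the above to
\[
n \;\leq\; -\Delta^2.
\]

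To estimate $-\Delta^2$: by adjunction on each smooth rational $(-e_k)$-curve $E_k$ one has $K_X\cdot E_k = e_k - 2$, hence $\Delta\cdot E_k = e_k - 2$. Therefore $\Delta^2 = \sum_k a_k(e_k - 2)$ and $-\Delta^2 < \sum_k e_k$, since $-a_k \in [0,1)$. The sum $\sum_k e_k$ is the total trace over all Hirzebruch-Jung chains $q/a_{i,j} = [e_1,\dots,e_{l(a_{i,j},q)}]$ at the nodes of $D$; by the Hirzebruch-Zagier identity $12 s(a,q) = \sum_i(e_i - 3) + (a + a^*)/q$, one has $\sum_i e_i = 12 s(a,q) + 3\,l(a,q) + O(1)$, and combining with $a_{i,j} \in \F^c$ and Theorem \ref{girstmair} gives $\sum_i e_i = O(\sqrt{q})$ per chain. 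Since the number of nodes of $D$ depends only on $\AR$, this yields $\sum_k e_k = O(\sqrt{q})$ and hence $n = O(\sqrt{q})$, completing the proof.

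The main obstacle is the refined inequality $(-\Delta)\cdot E_g \leq -\Delta^2$: while $n \leq (-\Delta)\cdot E_g$ uses only that $K_{\overline{W}}$ is nef, the sharpening to $-\Delta^2$ crucially exploits the nefness of $K_{X'}$ on the components of $\Gamma$. Without the hypothesis $K_{\overline{W}}$ nef, $(-1)$-curves appearing during the MMP can escape the immediate neighborhood of $\Gamma$ and $n$ need not be $O(\sqrt{q})$, so the asymptotic comparison would fail; this is exactly where the hypothesis is genuinely used.
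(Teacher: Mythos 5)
Your argument is correct, but it reaches the conclusion by a genuinely different mechanism than the paper. The paper's proof is cohomological (inspired by Rana's lemma): it bounds $s=K_{X'}^2-K_X^2$ by comparing plurigenera, $h^0(X,mK_X)\le h^0(\overline{W},mK_{\overline{W}})$ for $m=qn$, computing $\chi(mK_{\overline{W}})$ by Riemann--Roch using rationality of the singularities, applying Riemann--Roch on $X'$, and absorbing the error terms $h^1(\overline{W},mK_{\overline{W}})$ and $h^2(X',mK_{X'})$ via asymptotic Riemann--Roch for the nef bundles $mK_{\overline{W}}$ and $mK_{X'}$; this gives $s\le \sum_{i<j}c(a_{i,j},q)\,D_i\cdot D_j+o(1)\le (6\sqrt{q}+7)t_2$. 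You instead work purely with intersection numbers: from $E_g=K_X-g^*K_{X'}$ and $K_X\equiv f_3^*K_{\overline{W}}+\Delta$, nefness of $f_3^*K_{\overline{W}}$ gives $n\le(-\Delta)\cdot E_g$, and nefness of $g^*K_{X'}$ on the components of $\Delta$ (with discrepancies in $(-1,0]$) sharpens this to $n\le-\Delta^2$, which is $O(\sqrt{q})$ by Theorem \ref{girstmair}. Note that $-\Delta^2$ is exactly the quantity the paper computes in the first line of its proof, $-\Delta^2=\sum_{i<j}c(a_{i,j},q)\,D_i\cdot D_j-2\frac{q-1}{q}t_2$, so your detour through the trace identity $12\,s(a,q)=\sum_i(e_i-3)+(a+a^*)/q$ (which is correct) could be replaced by quoting that formula directly; both arguments end up bounding essentially the same number, and your closing remark about $(-1)$-curves away from the branch locus when $K_{\overline{W}}$ fails to be nef matches the paper's remark after the theorem. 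One point to record explicitly: your use of $g^*K_{X'}$ nef presupposes that $X$ is of general type (so that the minimal model $X'$ has nef canonical class) for $q\gg0$; this follows from $c_1^2(X)\approx\bar{c}_1^2\,q>0$ together with $\chi(\O_X)\to\infty$, as observed in the paragraph preceding the theorem, and the paper's own proof uses the same fact when it applies asymptotic Riemann--Roch to $mK_{X'}$, so it is not an additional hypothesis, only a step to state. What your route buys is the elimination of all cohomological input (no rational-singularity Riemann--Roch, no $O(m)$ error terms), at the cost of slightly more delicate bookkeeping with the discrepancy coefficients; what the paper's route buys is robustness, since it never needs the precise structure of $\Delta$, only the numerical identity for $-\Delta^2$ and the vanishing-free asymptotic estimates.
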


\begin{proof}
We have $K_X \equiv f_3^*(K_{\overline{W}}) - \Delta$ where $\Delta$ is an effective $\Q$-divisor supported in the exceptional loci of $f_3$. Then
$$K_{\overline{W}}^2 -K_X^2 =-\Delta^2= \sum_{i<j} c(a_{i,j},q) (D_i \cdot D_j) - 2 \frac{q-1}{q} \sum_{i<j} D_i \cdot D_j$$ where as always the numbers $0<a_{i,j}<q$ are defined by $\nu_i a_{i,j}+ \nu_j \equiv 0 \ (\text{mod} \ q)$. Let $t_2:= \sum_{i<j} D_i \cdot D_j$ be the number of nodes of $D_{\text{red}}$.

Let $m:=qn$ where $n$ is a positive integer. Then $$h^0(X,mK_X) \leq h^0(X,f_3^*(mK_{\overline{W}})).$$ Since the singularities are rational, by the theorem of Riemann-Roch for $X$ we have $$ \chi(mK_{\overline{W}})=\chi(f_3^*(m K_{\overline{W}}))=\frac{m(m-1)}{2} K_{\overline{W}}^2 + \chi(\O_X).$$ Now $h^0(X,f_3^*(mK_{\overline{W}}))=h^0({\overline{W}},mK_{\overline{W}})$ implies $$h^0(X,mK_X) \leq h^0({\overline{W}},mK_{\overline{W}}) \leq \frac{m(m-1)}{2} K_{\overline{W}}^2 + \chi(\O_X) + h^1({\overline{W}},mK_{\overline{W}}).$$ On the other hand, by the birational invariance of the plurigenus between nonsingular projective varieties, we have $h^0(X,mK_X)=h^0(X',mK_{X'})$. Let $K_{X'}^2=K_X^2+s$ for some $s \geq 0$. Then by the Riemann-Roch theorem on $X'$ $$ \frac{m(m-1)}{2} (K_X^2+s) + \chi(O_X) - h^2(X',mK_{X'}) \leq h^0(X',mK_{X'}),$$ and so $$s \leq  \sum_{i<j} c(a_{i,j},q) (D_i \cdot D_j) + \frac{2 h^1({\overline{W}},mK_{\overline{W}})}{m(m-1)} + \frac{2 h^2(X',mK_{X'})}{m(m-1)}.$$ Since the pair $(Z,\AR)$ is asymptotic, we have by Theorem \ref{girstmair} (with $C=1$) $$\sum_{i<j} c(a_{i,j},q) (D_i \cdot D_j) \leq \sum_{i<j} \big(12 |s(a_{i,j},q)| + l(a_{i,j},q) \big)(D_i \cdot D_j) \leq (6 \sqrt{q}+7)t_2,$$ and by the asymptotic Riemann-Roch's theorem \cite[App.Thm.2.15]{Ko96} applied to the nef line bundles $mK_{\overline{W}}$ and $mK_{X'}$, we know that $$h^1({\overline{W}},mK_{\overline{W}})=O(m) \ \ \text{and} \ \  h^2(X',mK_{X'})=O(m).$$ Therefore, $ s \leq (6 \sqrt{q}+7)t_2 + \frac{O(m)}{m(m-1)},$ which implies that $\lim_{q \to \infty} \frac{s}{q} =0$.
\end{proof}

Of course, this theorem would not be true without the assumption $K_W$ nef. For instance, the situation of a $(-1)$-curve in $Y$ disjoint from the branch divisor $D_{\text{red}}$ would produce $q$ $(-1)$-curves in $X$.

\section{Density of Chern slopes} \label{s4}

Let us fix an algebraically closed field $\K$ of characteristic $p>0$, and let $g\geq 0$ be an integer.

\begin{lemma}
The function $\lambda \colon \Q_{\geq1} \to [0,\infty)$ defined by $$\lambda(x)=\frac{x}{4}+ \frac{1}{4x} -\frac{1}{2}$$ has dense image. In particular, given $\alpha \in [0,\infty)$ and $\epsilon >0$, there is a rational $x=\frac{u}{v}>1$ with $u-v$ odd such that $|\lambda(x)-\alpha|<\epsilon$.
\label{easy}
\end{lemma}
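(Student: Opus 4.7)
The plan is to treat $\lambda$ as the restriction to $\mathbb{Q}_{\geq 1}$ of a real-analytic function on $(0,\infty)$, and then combine elementary calculus with a parity adjustment in the denominator/numerator.

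First, I would study $\lambda$ as a function on $[1,\infty)$. Differentiating, $\lambda'(x)=\tfrac{1}{4}-\tfrac{1}{4x^2}$, which is nonnegative on $[1,\infty)$ and strictly positive on $(1,\infty)$. Since $\lambda(1)=\tfrac14+\tfrac14-\tfrac12=0$ and $\lambda(x)\to\infty$ as $x\to\infty$, the map $\lambda\colon[1,\infty)\to[0,\infty)$ is a continuous, strictly increasing bijection. Hence for any $\alpha\in[0,\infty)$ there is a unique real $y\geq 1$ with $\lambda(y)=\alpha$, and by continuity there exists $\delta>0$ such that $|\lambda(x)-\alpha|<\epsilon$ whenever $x\geq 1$ and $|x-y|<\delta$.

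Next I would produce a rational $x=u/v>1$ with $u-v$ odd satisfying $|x-y|<\delta$. For $v$ a positive integer, consider the two consecutive integers $m:=\lfloor yv\rfloor$ and $m+1$; they have opposite parities, so exactly one of them, call it $u$, has parity opposite to that of $v$, which guarantees $u-v$ is odd. Moreover $|u/v-y|\leq 1/v$, so choosing $v$ larger than $1/\delta$ ensures $|u/v-y|<\delta$.

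The only remaining point is to arrange $u/v>1$ strictly. If $y>1$, then for $v$ large enough (namely $v>1/(y-1)$) we have $m\geq v+1$, so both candidates exceed $v$. If $y=1$ (the case $\alpha=0$), one takes instead the explicit sequence $x_v=(v+1)/v$ with $v\geq 2$, for which $u-v=1$ is odd, $x_v>1$, and $\lambda(x_v)\to\lambda(1)=0=\alpha$. In either case we obtain the required rational $x=u/v$.

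There is essentially no serious obstacle; the only subtlety is the simultaneous control of proximity to $y$ and of the parity of $u-v$, which is handled by the observation that two consecutive integers always have opposite parities, so one of them is guaranteed to yield $u-v$ odd.
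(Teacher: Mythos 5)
Your proof is correct and complete: the paper states this lemma without proof (it is marked as immediate), and your argument---that $\lambda$ restricted to $[1,\infty)$ is a continuous strictly increasing bijection onto $[0,\infty)$ (note $\lambda(x)=\frac{(x-1)^2}{4x}$), combined with approximating the preimage $y$ of $\alpha$ by $u/v$ where $u\in\{\lfloor yv\rfloor,\lfloor yv\rfloor+1\}$ is chosen with parity opposite to $v$---is exactly the routine verification the author leaves to the reader. The separate treatment of $\alpha=0$ and the choice $v>\max\{1/\delta,\,1/(y-1)\}$ correctly secure the remaining requirement $u/v>1$, so there is no gap.
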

\qed

\begin{proposition}
Let $e>0$ and $w>0$. Given real numbers $\alpha>0$, and $\epsilon>0$, there are integers $r>>0$, $d>>0$, and $u>>0$ such that $$\Big| \frac{p^r(\delta -e(d-1)-u e)}{(d-1)(2(g-1)+\delta)+\Upsilon} - \alpha \Big| < \epsilon,$$ where $\Upsilon=\frac{u}{2}(u-1)ep^r+u d e p^r + u \delta + 2(g-1) u+w(u+d-2)$, and $\delta=e\frac{d(d-1)}{2}$. \label{denseprop}
\end{proposition}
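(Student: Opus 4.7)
The plan is to exploit two nested asymptotic limits: first $r \to \infty$ with $d, u$ fixed, then $d, u \to \infty$ along a suitable rational ray.

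First, I reorganize the denominator to isolate its dependence on $p^r$. Collecting the terms of $\Upsilon$, one obtains
\[
(d-1)(2(g-1)+\delta) + \Upsilon \;=\; (d-1+u)\bigl[2(g-1)+\delta\bigr] + \frac{e p^r\, u(u+2d-1)}{2} + w(u+d-2),
\]
while the numerator simplifies to $\tfrac{e p^r}{2}\bigl[(d-1)(d-2) - 2u\bigr]$ using $\delta = ed(d-1)/2$. For fixed $d, u$ with $2u < (d-1)(d-2)$, dividing numerator and denominator by $p^r$ and letting $r \to \infty$ kills the $p^r$-independent contributions, so the expression converges to
\[
L(d,u) \;:=\; \frac{(d-1)(d-2) - 2u}{u(u+2d-1)} \;>\; 0.
\]

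Next, I take $d, u \to \infty$ along a rational ray $u/d = u_0/d_0 =: s > 0$ by setting $d = nd_0$ and $u = nu_0$. The numerator of $L(nd_0, nu_0)$ is $n^2 d_0^2 - n(3d_0 + 2u_0) + 2 \sim n^2 d_0^2$, and the denominator is $n^2 u_0(u_0 + 2d_0) - nu_0 \sim n^2 d_0^2\, s(s+2)$, so
\[
\lim_{n \to \infty} L(nd_0, nu_0) \;=\; \frac{1}{s(s+2)}.
\]
The map $s \mapsto 1/(s(s+2))$ is a continuous strictly decreasing bijection from $(0,\infty)$ onto $(0,\infty)$; in particular its values on positive rationals are dense in $(0,\infty)$.

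Combining the two limits, for the given $\alpha > 0$ and $\epsilon > 0$ I first choose a positive rational $s = u_0/d_0$ with $|1/(s(s+2)) - \alpha| < \epsilon/3$, then $n$ large enough that $|L(nd_0, nu_0) - 1/(s(s+2))| < \epsilon/3$, and finally $r$ large enough that the original expression lies within $\epsilon/3$ of $L(nd_0, nu_0)$. Setting $d := nd_0$ and $u := nu_0$ and applying the triangle inequality yields the claim, with all three of $r, d, u$ large. The only technical point is checking $2u < (d-1)(d-2)$ so that $L(d,u) > 0$: this holds automatically for $n$ large, since $u = nu_0$ grows linearly in $n$ while $(d-1)(d-2) \sim n^2 d_0^2$ grows quadratically. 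No serious obstacle arises beyond this asymptotic bookkeeping; the core content is simply that the rescaled ratio $1/(s(s+2))$ has dense range.
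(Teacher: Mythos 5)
Your proof is correct, and it reaches the paper's intermediate quantity by the same first step but handles the density step differently. Like the paper, you observe that for fixed $d,u$ the expression tends, as $r\to\infty$, to $L(d,u)=\frac{(d-1)(d-2)-2u}{u(u+2d-1)}$ (the paper writes the denominator as $u(u-1)+2ud$, which is the same), and then you choose $r$ last via a triangle inequality; your algebraic regrouping of the denominator is verified correct. Where you diverge is in showing that $L(d,u)$ takes values densely in $(0,\infty)$: the paper sets $v=2d-1+u$, proves the exact identity $L(d,u)=\lambda(u/v)-\frac1u-\frac1v+\frac{3}{4uv}$ with $\lambda(x)=\frac{x}{4}+\frac{1}{4x}-\frac12$, invokes Lemma \ref{easy} on the density of the image of $\lambda$, and needs the parity condition $u-v$ odd so that $d=(v+1-u)/2$ is an integer; you instead scale $(d,u)=(nd_0,nu_0)$ along a rational ray $s=u_0/d_0$ and compute $\lim_n L(nd_0,nu_0)=\frac{1}{s(s+2)}$, a decreasing bijection of $(0,\infty)$ onto itself. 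The two are consistent, since $\lambda\bigl(\tfrac{s}{s+2}\bigr)=\tfrac{1}{s(s+2)}$, which is exactly the limit of $\lambda(u/v)$ along your ray. Your route buys a cleaner argument: no auxiliary density lemma, no integrality/parity bookkeeping, and positivity of the denominator is manifest from your regrouping for large $n$ (the paper has to remark separately that the denominator is nonzero); the paper's route buys an exact identity with explicit error terms $\frac1u+\frac1v-\frac{3}{4uv}$, hence a slightly more quantitative control over how large $u,v$ must be. Your only implicit checks, that $2u<(d-1)(d-2)$ and that the denominator is positive for large $n$, are correctly disposed of by the linear-versus-quadratic growth observation, so there is no gap.
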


\begin{proof}
Let $v:= 2d-1+u$. Then, we have the expression $$\frac{(d-1)(d-2)-2u}{u(u-1)+2ud} = \lambda(u/v)-\frac{1}{u} -\frac{1}{v} + \frac{3}{4uv}.$$ Take $u,v$ as in Lemma \ref{easy} such that $|\lambda(x)-\alpha|<\frac{\epsilon}{5}$, $\frac{1}{u}<\frac{\epsilon}{5}$, $\frac{1}{v}<\frac{\epsilon}{5}$, and $\frac{3}{4uv}<\frac{\epsilon}{5}$. We can achieve this by multiplying $u$ and $v$ with an arbitrarily large odd integer (thus $u-v$ is odd). In this way, we can solve $2d=v+1-u$ for an integer $d \geq 3$.

We note that for $d$ and $u$ sufficiently large, the denominator $(d-1)(2(g-1)+\delta)+\Upsilon$ will not be zero. Let us fix $u,v$ from the previous paragraph so that this happens.

Finally, let $r>>0$ be an integer such that $$\Big| \frac{p^r(\delta -e(d-1)-u e)}{(d-1)(2(g-1)+\delta)+\Upsilon} - \frac{(d-1)(d-2)-2u}{u(u-1)+2ud} \Big| < \frac{\epsilon}{5}.$$

Then we have $$\Big| \frac{p^r(\delta -e(d-1)-u e)}{(d-1)(2(g-1)+\delta)+\Upsilon} - \alpha \Big| < $$ $$ \Big|\frac{p^r(\delta -e(d-1)-u e)}{(d-1)(2(g-1)+\delta)+\Upsilon} - \frac{(d-1)(d-2)-2u}{u(u-1)+2ud} \Big| + \ \ \ \ \ \ \ \ \ \ \ \ \ \ \ \ \ $$ $$ \ \ \ \ \ \ \ \ \ \ \ \ \ \ \ \ \ \ \ \ \ \ \ \ \ \ \ \ \ \ \ \ \ \ \ \ \ \ \ \ \Big| \frac{(d-1)(d-2)-2u}{u(u-1)+2ud} - \alpha \Big| < $$ $$\frac{\epsilon}{5} + \Big| \lambda(u/v)-\frac{1}{u} -\frac{1}{v} + \frac{3}{4uv}  - \alpha \Big| < \frac{\epsilon}{5} + \Big| \lambda(u/v)- \alpha \Big| + \frac{1}{u} + \frac{1}{v} + \frac{3}{4uv} < \epsilon $$

\end{proof}

Let $\alpha>0$, and let $0<\epsilon<<1$. Let $C$ be a nonsingular projective curve of genus $g \geq 0$, and $P \in C$ some point in $C$. We consider integers $w>0$, $e>0$, and the invertible sheaf $\I'=\O_C(eP)$, so that we can choose an arbitrary number of general sections $\{S'_1, \ldots, S'_d \}$ in $\P(\O_{C} \oplus \I'^{-1})$, where (as in \S \ref{s1}) $S'_i \sim C'_{0} + \pi'^*(\I')$. Let us fix integers $d,r,u>>0$ as in Proposition \ref{denseprop}. As in \S \ref{s1}, let $Z=\P(\O_{C} \oplus \I^{-1})$, and consider the arrangement $$\AR = \{S_1,\ldots,S_d,S_{d+1},F_1,\ldots,F_{\delta},H_1,\ldots, H_u,R_1,\ldots, R_w\}.$$ Let $\pi \colon Z \to C$ be the defining $\P^1$-bundle. We have
$$S_i \sim S_{d+1} + \pi^*(\I) \sim H_j$$ where $S_{d+1}$ is the negative section with $S_{d+1}^2=-ep^r$.

As in \cite[\S8]{Urz11}, let us consider integer solutions $\{0<x_i\}_{i=1}^{d+u}$, $\{0<y_i \}_{i=1}^{\delta+w}$ for the equation \begin{equation} \label{eq} \sum_{i=1}^{d+u} e p^r x_i + \sum_{i=1}^{\delta+w} y_i = q \end{equation} for some prime number $q \neq p$ sufficiently large. Let $x:= q - \sum_{i=1}^{d+u} x_i$. For $q$ large enough, the equation above has indeed
nonnegative solutions, exactly $$ \frac{q^{d+u+\delta+w-1}}{(d+u+\delta+w-1)! (ep^r)^{d+u}} + O(q^{d+u+\delta+w-2}).$$ In this way,
$$\sum_{i=1}^{d} x_i S_i + \sum_{i=1}^{u} x_{d+i} H_i + x S_{d+1} + \sum_{i=1}^{\delta} y_i F_i + \sum_{i=1}^{w} y_{\delta +i} R_i \ \ \ \ \ \ $$ $$ \ \ \ \ \ \ \ \ \ \ \sim q S_{d+1} + \pi^* \Big(  \O_{C}\big( (\sum_{i=1}^{d+u} x_i) ep^r P \big) + \M \Big),$$
for some line bundle $\M$ on $C$ of degree $\sum_{i=1}^{\delta+w} y_i$. Then, since $$\big(\sum_{i=1}^{d+u} x_i \big)ep^r + \deg \M =q,$$ there is an
invertible sheaf $\N$ on $C$ such that $$\sum_{i=1}^{d} x_i S_i + \sum_{i=1}^{u} x_{d+i} H_i+ x S_{d+1} + \sum_{i=1}^{\delta} y_i F_i + \sum_{i=1}^{w} y_{\delta +i} R_i \sim q ( S_{d+1} + \pi^* \N).$$

Let $\sigma \colon Y \to Z$ be the minimal log resolution of $\AR$. In \cite[\S8]{Urz11} we prove that we can assign solutions of the above equation (\ref{eq}) to the corresponding curves in $\AR$, so that the pair $(Z,\AR)$ is asymptotic (as in \S \ref{s3}). Here $\L:= \sigma^*(S_{d+1} + \pi^* \N)$, and $$D:=\sum_j \nu_j D_j:= \sigma^* \Big(\sum_{i=1}^{d} x_i S_i + \sum_{i=1}^{u} x_{d+i} H_i + x S_{d+1} + \sum_{i=1}^{\delta} y_i F_i + \sum_{i=1}^{w} y_{\delta +i} R_i \Big).$$ As before, using $\L^q \simeq \O_Y(D)$, we have the $q$-th root cover $f=f_2 \circ f_3 \colon X \to \overline{W} \to Y$ where $f_3 \colon X \to \overline{W}$ is the resolution of singularities of the normal projective surface $\overline{W}$ constructed in \S \ref{s3} via the finite morphism $f_2 \colon \overline{W} \to Y$.

\begin{proposition}
We have that $K_{\overline{W}}$ is nef.
\label{nef}
\end{proposition}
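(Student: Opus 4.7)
The strategy is to descend the question from $\overline{W}$ to $Y$. The ramification formula recalled in \S\ref{s2} gives
\[
K_{\overline{W}} \equiv f_2^{*}\bigl(K_Y + \tfrac{q-1}{q} D_{\text{red}}\bigr),
\]
and since $f_2$ is finite and surjective, nefness of $K_{\overline{W}}$ is equivalent to nefness of the $\Q$-divisor $L := K_Y + \tfrac{q-1}{q} D_{\text{red}}$ on $Y$. I would therefore verify $L \cdot \Gamma \geq 0$ for every irreducible curve $\Gamma \subset Y$, splitting into the cases $\Gamma \subset D_{\text{red}}$ and $\Gamma \not\subset D_{\text{red}}$.

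For $\Gamma = D_i$ a component of $D_{\text{red}}$, adjunction yields
\[
L \cdot D_i = 2g(D_i) - 2 - \tfrac{1}{q} D_i^2 + \tfrac{q-1}{q} m_i,
\]
where $m_i$ is the number of nodes of $D_{\text{red}}$ on $D_i$. I would partition the components by origin: strict transforms of the sections $S_1, \dots, S_d, S_{d+1}, H_1, \dots, H_u$ (each of genus $g(C)$ with many transverse intersections forced by the arrangement), strict transforms of the fibers $F_j$ and $R_k$ (rational with $d+u$ and $d+u+1$ nodes respectively), and the exceptional curves of $\sigma$. For the non-exceptional families the intersection data collected in \S\ref{s1} make $L \cdot D_i$ strictly positive once $d,u \geq 3$.

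The main obstacle is the exceptional locus. The minimal log resolution over each tangent point of order $p^r$ between two sections $S_a, S_b$ (through which a fiber $F_j$ also passes) produces a chain $E_1, \dots, E_{p^r}$ of smooth rational curves with $E_1^2 = \cdots = E_{p^r - 1}^2 = -2$ and $E_{p^r}^2 = -1$. The tail $E_{p^r}$ meets $E_{p^r-1}, S_a^{(Y)}, S_b^{(Y)}$, giving $L \cdot E_{p^r} = 1 - 2/q > 0$. Each middle $(-2)$-curve meets its two chain neighbours, so $D_{\text{red}} \cdot E_j = -2 + 1 + 1 = 0$ and $L \cdot E_j = 0$. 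The subtle point is $E_1$: within the exceptional locus it only sees $E_2$, which by itself would force $L \cdot E_1 < 0$. The saving observation is that the strict transform $F_j'$ of the fiber through the tangent point meets precisely $E_1$ (the first blow-up separates the $F_j$-direction from the $S_a S_b$-tangent direction, and all subsequent blow-ups lie on the $S_a S_b$-tangent subchain, away from $F_j'$). Hence $m_{E_1} = 2$, so $D_{\text{red}} \cdot E_1 = 0$ and $L \cdot E_1 = 0$. This is precisely why the arrangement $\AR_0$ in \S\ref{s1} was constructed to include the $\delta$ fibers through the tangent points.

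Finally, for $\Gamma \not\subset D_{\text{red}}$, the image $\sigma(\Gamma) \subset Z$ is an irreducible curve outside $\AR$, so $D_{\text{red}} \cdot \Gamma \geq 0$; a direct intersection-theoretic computation on the $\P^1$-bundle $\pi \colon Z \to C$ (splitting according to whether $\sigma(\Gamma)$ is a fiber, a section, or a higher multisection) shows $\Gamma$ meets sufficiently many components of $\AR$ to give $L \cdot \Gamma > 0$ for $d,u$ large (for instance a general fiber of $\pi$ meets all $d + u + 1$ sections). Combining the cases, $L$ is nef on $Y$, hence $K_{\overline{W}}$ is nef on $\overline{W}$.
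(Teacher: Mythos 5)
Your reduction is sound: since $f_2$ is finite surjective and $K_{\overline{W}}\equiv f_2^*\bigl(K_Y+\frac{q-1}{q}D_{\text{red}}\bigr)$, it suffices to prove $L:=K_Y+\frac{q-1}{q}D_{\text{red}}$ is nef on $Y$, and your treatment of the components of $D_{\text{red}}$ is correct. In particular the chain analysis over the tangency points matches the paper's numerics exactly: $L\cdot\Gamma=0$ on the $(-2)$-curves (the first one indeed being rescued by the strict transform of the fiber $F_j$, which is why those fibers are in $\AR_0$) and $L\cdot\Gamma=1-\frac{2}{q}$ on the terminal $(-1)$-curve, which is the paper's $K_{\overline{W}}\cdot\overline{G_{i,j}}=\frac{q-2}{q}$ or $0$.

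The gap is the last case, $\Gamma\not\subset D_{\text{red}}$, which you assert rather than prove, and it is not a routine verification. Knowing $D_{\text{red}}\cdot\Gamma\geq 0$ gives nothing by itself, because $K_Y\cdot\Gamma$ can be negative (a general fiber already has $K_Y\cdot\Gamma=-2$), so one must show the $\frac{q-1}{q}D_{\text{red}}\cdot\Gamma$ term dominates for \emph{every} irreducible curve. Your sample computation (a general fiber meets the $d+u+1$ sections) only treats the easiest subcase. The delicate one is a multisection $\sigma(\Gamma)\equiv aS_{d+1}+bF$ passing through the $\delta$ tangency points and their infinitely near points with multiplicities $m_{i,k}$: then $K_Y\cdot\Gamma=K_Z\cdot\sigma(\Gamma)+\sum m_{i,k}$ while the strict transform loses roughly $3\sum m_{i,k}$ from $D_{\text{red}}\cdot\Gamma$, so there is a net loss of about $2\sum m_{i,k}$ that must be bounded against the $(d+u)b$ coming from the sections; this requires a genuine estimate on infinitely near multiplicities (e.g. $\sum_k m_{i,k}\leq(S_a\cdot\sigma(\Gamma))_{P_i}$ for a section through $P_i$, summed over $i$), which your proposal does not carry out. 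The paper sidesteps this entire case: it rewrites $qK_Y+(q-1)D_{\text{red}}$ as an explicit $\Q$-effective combination of $S_{d+1},S_i,H_i,E_i,F$, so that $qK_{\overline{W}}$ is numerically equivalent to an effective divisor supported on $\overline{S_i},\overline{H_i},\overline{G_{i,j}},\overline{F}$; nefness then only needs to be checked against those finitely many curve classes, which is done via the projection formula. To complete your route you must either supply the multiplicity estimate for arbitrary curves not in the branch locus, or adopt the effective-representative argument; as written the proof is incomplete at exactly the step the paper's structure is designed to avoid.
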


\begin{proof}
We know that $$qK_{{\overline{W}}} \equiv f_2^*\Big(qK_Y+(q-1) D_{\text{red}} \Big).$$ Let $P_1, \ldots, P_{\delta}$ be the points in $Z$ where pairs of sections in $\{S_1,\ldots,S_d \}$ intersect. Let $G_{i,j}$ be the exceptional curves over $P_i$ for $\sigma$, and let $E_i := \sum_{j=1}^{p^r} j G_{i,j}$. In this way $$K_Y \equiv -2 S_{d+1} -(2-2g+ep^r) F + \sum_{i=1}^{\delta} E_i $$ where $F$ is the general fiber of $\pi \circ \sigma \colon Y \to Z \to C$. We always use the same letter for the strict transform of a curve. Note that $$D_{\text{red}} \equiv \sum_{i=1}^{d+1} S_i + \sum_{i=1}^u H_i + \sum_{i=1}^{\delta} F_i + \sum_{i=1}^{w} R_i + \sum_{i=1}^{\delta} \sum_{j=1}^{p^r} G_{i,j},$$ and so $D_{\text{red}} \equiv \sum_{i=1}^{d+1} S_i +\sum_{i=1}^u H_i + (\delta+w) F,$ and $$\sum_{i=1}^d S_i \equiv d S_{d+1} + d e p^r F - 2 \sum_{i=1}^{\delta} E_i .$$ Hence $qK_Y + (q-1) D_{\text{red}}$ is numerically equivalent to

$$\frac{(q-1)(d-2)-4}{2} S_{d+1} + \frac{q-1}{2} \sum_{i=1}^d S_i + \sum_{i=1}^{\delta} E_i + t F + (q-1) \sum_{i=1}^u H_i$$ where $t:= (\delta+w+\frac{dep^r}{2})(q-1)-q(2-2g+ep^r)>0$ for $q>>0$. In this way, the divisor $qK_{{\overline{W}}}$ has $\Q$-effective support on $\overline{S_i}:=f_2^*(S_i)_{red}$, $\overline{G_{i,j}} := f_2^*(G_{i,j})_{red}$, $\overline{H_i}:= f_2^*(H_i)_{red}$, and $\overline{F}:= f_2^*(F)_{red}$ for all $i,j$. We have the pullback relations $q\overline{S_i}\equiv f_2^*(S_i)$, $q \overline{H_i}\equiv f_1^*(H_i)$, $q\overline{G_{i,j}} \equiv f_2^*(G_{i,j})$, and $\overline{F} \equiv f_2^*(F)$.

To prove that $K_{\overline{W}}$ is nef, it is enough to show that $K_{\overline{W}} \cdot \Gamma \geq 0$ for $\Gamma \in \{ \overline{S_i},\overline{G_{i,j}},\overline {H_i}, \overline{F} \}$. We have $$K_{\overline{W}} \cdot \overline{S_i} = \frac{1}{q^2} f_2^*(qK_Y +(q-1)D_{red}) \cdot f_2^*(S_i) = \frac{1}{q}(qK_Y +(q-1)D_{red}) \cdot S_i$$ and so $K_{\overline{W}} \cdot \overline{S_i} = \frac{1}{q} (\frac{-(q-1)}{2} (d-2)ep^r + ep^r(d-1)+t+(q-1)uep^r)>0$ for $q>>0$ and $1 \leq i \leq d$. We also have $K_{\overline{W}} \cdot \overline{S_{d+1}}=\frac{1}{q}(q(2g-2)+ep^r+(\delta+w)(q-1)) >0$. Similarly $K_{\overline{W}} \cdot \overline{G_{i,j}} = \frac{1}{q} (q-2)$ if $G_{i,j}^2=-1$, or $0$ if $G_{i,j}^2=-2$. Finally $K_{\overline{W}} \cdot \overline{F}= (d-1)q-(d+1)+(q-1)u>0$ for $q>>0$, and $K_{\overline{W}} \cdot \overline{H_i} = \frac{1}{q} (\frac{q-1}{2}dep^r + t + (q-1)uep^r)$.
\end{proof}

Therefore, for $q>>0$ the nonsingular projective surfaces $X$ corresponding to the asymptotic pair $(Z,\AR)$ are asymptotically minimal by Theorem \ref{asymptoticallyminimal}, this is, the minimal models $X'$ of $X$ satisfy \[ \lim_{q \to \infty} \frac{c_1^2(X')}{c_2(X')} = \frac{\bar{c}_1^2(Z,\AR)}{\bar{c}_2(Z,\AR)}. \] Then by Proposition \ref{denseprop} (and formula at the end of \S \ref{s2}), we have that $$\Big|\frac{c_1^2(X')}{c_2(X')}-(2+\alpha) \Big| \leq \epsilon$$ for $q>>0$.

On the other hand, we note that $\pi \circ \sigma \colon Y \to C$ induces a fibration $h \colon X \to C$ where $h= \pi \circ \sigma \circ f$. The strict preimages of the sections $S_i$ are sections of $h$, and $f^*(\sigma^*(R_i))$ is a fiber of $h$ for any $1\leq i \leq \delta$, whose support is a tree of $\P^1$'s. By Corollary \ref{corlemma}, we have $\pi_1^{\text{\'et}}(X) \simeq \pi_1^{\text{\'et}}(C)$. Since the \'etale fundamental group is preserved under the blowup of a nonsingular point, we obtain that $\pi_1^{\text{\'et}}(X') \simeq \pi_1^{\text{\'et}}(C)$.

Therefore, we have a proof of the main theorem.

\begin{theorem}
Let $C$ be a nonsingular projective curve over an algebraically closed field of characteristic $p>0$. Then, for any number $x \geq 2$, there are minimal surfaces of general type $X'$ with
\begin{itemize}
\item[a)] $c_1^2(X')>0, c_2(X')>0$,

\item[b)]$\pi_1^{\text{\'et}}(X') \simeq \pi_1^{\text{\'et}}(C)$,

\item[c)] and $c_1^2(X')/c_2(X')$ arbitrarily close to $x$.

\end{itemize}

The surfaces $X$ have a large deformation space.
\label{main}
\end{theorem}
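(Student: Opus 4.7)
The plan is to splice together the material of Sections \ref{s1}--\ref{s3} with Propositions \ref{denseprop} and \ref{nef}. Fix $x \geq 2$, write $x = 2 + \alpha$ with $\alpha \geq 0$, and fix $\epsilon > 0$. First I would choose positive integers $e$ and $w$ and apply Proposition \ref{denseprop} to select integers $r, d, u \gg 0$ satisfying
\[
\Big| p^r \frac{\delta - e(d-1) - u e}{(d-1)(2(g-1)+\delta) + \Upsilon} - \alpha \Big| < \tfrac{\epsilon}{2};
\]
the explicit formula at the end of \S\ref{s1} then places $\bar{c}_1^2(Z,\AR)/\bar{c}_2(Z,\AR)$ within $\epsilon/2$ of $x$, and for such parameters both log Chern numbers are positive.

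Next I would construct the asymptotic $q$-th root cover. For a sufficiently large prime $q \neq p$, solve the Diophantine equation (\ref{eq}) as in the discussion preceding Proposition \ref{nef} to assign multiplicities $\{x_i\}, \{y_j\}$ to the curves of $\AR$ that make $(Z, \AR)$ an asymptotic pair in the sense of Definition \ref{asym}; existence of valid assignments for $q \gg 0$ follows from the counting argument of \cite[Thm.8.1]{Urz11}. Setting $\L = \sigma^*(S_{d+1} + \pi^*\N)$ provides the data for the $q$-th root cover $f = f_2 \circ f_3 \colon X \to \overline{W} \to Y$ of \S\ref{s2}. By Proposition \ref{nef}, $K_{\overline{W}}$ is nef, so Theorem \ref{asymptoticallyminimal} yields
\[
\lim_{q \to \infty} \frac{c_1^2(X')}{c_2(X')} = \frac{\bar{c}_1^2(Z,\AR)}{\bar{c}_2(Z,\AR)}
\]
for the minimal model $X'$ of $X$, and choosing $q$ large enough gives $|c_1^2(X')/c_2(X') - x| < \epsilon$. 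Since the two log Chern numbers are positive and $c_i(X) \sim \bar{c}_i(Z,\AR)\, q$ for $q \gg 0$, both $c_1^2(X')$ and $c_2(X')$ are positive and $X'$ is of general type.

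For the \'etale fundamental group, the composition $h := \pi \circ \sigma \circ f \colon X \to C$ is a fibration. The strict transforms of the sections $S_1, \ldots, S_d$ give sections of $h$, so no fiber of $h$ is multiple; the non-smooth fibers are trees of $\P^1$'s. Corollary \ref{corlemma} of the appendix---which extends Grothendieck's fibration exact sequence to nonreduced but nonmultiple fibers---then gives $\pi_1^{\text{\'et}}(X) \simeq \pi_1^{\text{\'et}}(C)$. Because $X \to X'$ is a composition of contractions of $(-1)$-curves and blowups of nonsingular points preserve $\pi_1^{\text{\'et}}$, the same isomorphism holds for $X'$. The large deformation space of $X$ comes from the freedom in choosing the generic sections $S_i', H_j$ and the fibers $R_k$ in $Z$, which lift to deformations of the cover.

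The main obstacle I anticipate is the third step: verifying that $h$ has no multiple fiber and that its nonreduced fibers are trees of $\P^1$'s, so that Corollary \ref{corlemma} applies. This requires carefully analyzing the ramification of $f_2$ above each fiber of $\pi \circ \sigma$ appearing in $\AR$, using crucially that the sections $S_i$ of the ruling pull back under the cover to sections of $h$, which rules out multiplicity. Once these fiber-structure facts are in place, the theorem follows by assembling the explicit formulas already established in \S\ref{s1}--\S\ref{s3}.
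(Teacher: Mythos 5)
Your proposal follows the paper's own proof essentially step for step: Proposition \ref{denseprop} to tune the log Chern slope of $(Z,\AR)$, the asymptotic $q$-th root cover of \S\ref{s2}--\S\ref{s3} with nefness of $K_{\overline{W}}$ (Proposition \ref{nef}) feeding into Theorem \ref{asymptoticallyminimal}, and the fibration $h=\pi\circ\sigma\circ f\colon X\to C$ with sections coming from the $S_i$ and a simply connected fiber over the $R_i$, handled by Corollary \ref{corlemma}. The only cosmetic deviation is your claim that all non-smooth fibers are trees of $\P^1$'s (only the existence of one such fiber, over an $R_i$, is needed or asserted in the paper), which does not affect the argument.
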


\begin{remark}
In the construction above, the parameter $w$ is arbitrary. This is the number of general fibers $\{R_1,\ldots,R_w\}$ in $Y$, which give a large number of moduli parameters in the construction of $X$. We also have $u$ general sections $\{ H_1,\ldots,H_{u}\}$, where $u$ is very large.

Another point for considering these curves $H_i,R_j$ is that one could use them to prove that $\pic^0(X')$ is a reduced scheme, for certain curves $C$ of genus $g$ (arbitrary), provided that we have a Kawamata-Viehweg vanishing theorem for rational surfaces as in \cite[Thm.2.64]{KM1998}. We do not show details here about how to use that conjectured vanishing theorem to prove that $\pic^0(X')$ is reduced. Instead, in the next section we take another point of view to prove such result, in the case when $C=\P^1$ and for a modified construction, but in the spirit of Theorem \ref{main}.
\end{remark}

\section{Simply-connected density in $[2,\infty)$ with reduced $\pic^0$} \label{s5}

\subsection{Ordinary cyclic covers of $\P^1$ of high degree} \label{s51}

Let $C$ be a nonsingular projective curve of genus $g$ over an algebraically closed field $\K$ of characteristic $p>0$. The \textit{p-rank} of $C$ is the integer $0\leq \sigma(C) \leq g$ such that the number of $p$-torsion points of its Jacobian is $p^{\sigma(C)}$.

For any variety $X$ defined over $\K$, let $$F \colon H^1(X,\O_X) \to H^1(X,\O_X)$$ be the $p$-linear morphism induced by the action of the Frobenius morphism. As in \cite[p.38]{Se58} (see also \cite[\S5]{Se58b} or \cite[p.101]{Mum67} or \cite[p.321]{MOagII}), we can decompose $$H^1(X,\O_X)= H^1(X,\O_X)_{ss} \oplus H^1(X,\O_X)_n $$ where $H^1(X,\O_X)_{ss}$ is the semi-simple part (where $F$ is bijective), and $H^1(X,\O_X)_{n}$ is the nilpotent part. If $X=C$, the dimension of $H^1(X,\O_X)_{ss}$ over $\K$ is $\sigma(C)$. Another way to compute the $p$-rank is via the isomorphism $H_{\text{\'et}}^1(C,\Z/p\Z) \simeq (\Z/p\Z)^{\sigma(C)}$. The curve $C$ is said to be \textit{ordinary} if $\sigma(C)=g$.

We are interested in the $p$-rank of curves defined as cyclic covers of $\P^1$ of degree $q$ coprime to $p$. Such cyclic covers are defined by $r$ points $\{P_1, \ldots, P_r \}$ in $\P^1$ together with multiplicities $\textbf{a}:=\{a_1,\ldots,a_r \}$ such that $0 < a_i < q$, $a_1+a_2+\ldots+a_r=ql$ for some integer $l$, and gcd$(a_1,\ldots,a_r,q)=1$. Then, in analogy to \S  \ref{s2} (see e.g. \cite[IV]{Urz08}), there is a cyclic cover $f \colon C \to \P^1$ branched exactly at the points $\{P_1, \ldots, P_r\}$ so that $C=\spec_{\P^1} \Big( \bigoplus_{i=0}^{q-1} {\L^{(i)}}^{-1} \Big),$ where $$\L^{(i)}= \O_{\P^1} \big( il \big) \otimes \O_{\P^1} \Big( -\sum_{j=1}^r \Bigl[\frac{a_j \ i}{q}\Bigr] P_j \Big) \simeq \O_{\P^1} \Big( il - \sum_{j=1}^r \Bigl[\frac{a_j \ i}{q}\Bigr] \Big)$$ for $i \in{\{0,1,\ldots ,q-1 \}}$. We have $$H^1(C,\O_C) \simeq \bigoplus_{i=0}^{q-1} H^1 \big(\P^1, {\L^{(i)}}^{-1} \big),$$ using the decomposition of $f_* \O_C$ as eigenspaces with respect to the action of $\Z/q\Z$. As in \cite[Section 3, (1) and (4)]{B01}, the action of $F$ on $H^1(C,\O_C)$ sends $H^1(\P^1, {\L^{(i)}}^{-1})$ to $H^1(\P^1, {\L^{(ip)}}^{-1})$, and so $$\sigma(C) \leq \sum_{i=1}^{q-1} \text{min} \{ \text{dim}_{\K} H^1 \big(\P^1, {\L^{(ip^k)}}^{-1} \big) \colon k \in (\Z /q\Z)^*  \}=: B(\textbf{a},0)$$

Notice that $\text{dim}_{\K} H^1(\P^1, {\L^{(i)}}^{-1})= il- \sum_{j=1}^r \bigl[\frac{a_j i}{q}\bigr]-1$ by Riemann-Roch theorem.

\begin{example}
Let us take a partition of a large prime number $$a_1+\ldots+a_r=q.$$ Assume that $p$ is a generator of the multiplicative group $(\Z/q\Z)^*$. Then $ \text{dim}_{\K} H^1(\P^1, {\L^{(1)}}^{-1})=0$. On the other hand, there is only one orbit for the $ip^k$, and so $B(\textbf{a},0)=0$. In this way, the $p$-rank of such cyclic covers is always $0$. Notice that by Artin's conjecture on primitive roots, there should exist infinitely many such $q$ for any given prime $p$.

\label{prank0}
\end{example}

By \cite[Lemma 4.6]{B01}, given the data $q,\textbf{a}$, the set of points $\{P_1,\ldots,P_r\}$ for which $\sigma(C)=B(\textbf{a},0)$ form an open set of $\underbrace{\P^1 \times \ldots \times \P^1}_r$. This open set could be empty.

\begin{lemma}
Let $q$ be a positive integer coprime to $p$, and let $\{a_1,\ldots,a_l\}$ be such that $0<a_j<q$, and gcd$(a_1,\ldots,a_l,q)=1$. We consider $$\textbf{a}=\{a_1,q-a_1,a_2,q-a_2,\ldots,a_l,q-a_l \}.$$ Then the set of points $\{P_1,\ldots,P_{2l}\}$ for which the corresponding cyclic covers are ordinary form a nonempty open set of $\underbrace{\P^1 \times \ldots \times \P^1}_r$.
\label{ordinary}
\end{lemma}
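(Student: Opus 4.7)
The plan has two parts: first identify Bouw's upper bound $B(\mathbf{a},0)$ with the genus $g(C)$ for the balanced multiplicities at hand, so that the ordinary locus coincides with the locus where the bound is attained, and then exhibit an ordinary cover to secure nonemptyness.

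For the first part, I would use the pairing identity $a_k i+(q-a_k)i=qi\equiv 0\pmod q$ to derive
\[
\Bigl\lfloor\tfrac{a_k i}{q}\Bigr\rfloor+\Bigl\lfloor\tfrac{(q-a_k) i}{q}\Bigr\rfloor=i-1+\delta_k(i),
\]
where $\delta_k(i)=1$ if $q\mid a_k i$ and $0$ otherwise. Summing over the $l$ pairs yields $\deg\L^{(i)}=l-n(i)$ with $n(i)=\#\{k:q\mid a_k i\}$, and hence $\dim_{\K}H^1(\P^1,{\L^{(i)}}^{-1})=\max\{0,l-1-n(i)\}$. A direct comparison with Riemann--Hurwitz, using the hypothesis $\gcd(a_1,\ldots,a_l,q)=1$ to ensure connectedness of $C$, shows that these dimensions sum to $g(C)$. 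Moreover, since $p$ is a unit modulo every divisor of $q$, the function $n(i)$ is constant on each Frobenius orbit $\{ip^k\}\subset(\Z/q\Z)^*$, so every term in Bouw's minimum matches the corresponding eigenspace dimension. Thus $B(\mathbf{a},0)=g(C)$ and the ordinary locus in $(\P^1)^{2l}$ is precisely the locus where $\sigma(C)$ attains $B(\mathbf{a},0)$, which is open by \cite[Lemma 4.6]{B01}.

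For nonemptyness, my plan is to produce an explicit ordinary cover. The eigenspace decomposition $H^1(C,\O_C)=\bigoplus_{i\in(\Z/q\Z)^*}H^1(\P^1,{\L^{(i)}}^{-1})$ turns Frobenius into a family of maps $F_i\colon H^1(\P^1,{\L^{(i)}}^{-1})\to H^1(\P^1,{\L^{(ip)}}^{-1})$, each represented, in a monomial basis of the eigenspaces, by an explicit polynomial matrix in the branch coordinates. Ordinariness amounts to every composition $F_{ip^{n-1}}\circ\cdots\circ F_i$ around a Frobenius orbit of length $n$ being invertible on the corresponding $(l-1)$-dimensional eigenspace. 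My plan is to specialize the branch points along an orbit of a $\Z/q\Z$-action on $\P^1$ that is compatible with the pairing $\{a_j,q-a_j\}$: this symmetry simultaneously diagonalizes the matrices $F_i$ on each eigenspace, reducing the rank question around each Frobenius orbit to the nonvanishing of a product of explicit scalar expressions in differences of branch coordinates. Choosing a sufficiently generic symmetric configuration makes all such scalars nonzero, producing an ordinary member of the family.

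The main obstacle is the bookkeeping of the $\Z/q\Z$-equivariant Hasse--Witt computation: one must verify that the symmetric configuration chosen is not accidentally a zero of any of the scalar factors produced by the diagonalization, which reduces to a finite collection of explicit polynomial nonvanishing conditions independent of $q$. Once these are checked, lower semicontinuity of $\sigma$ on the open ordinary locus (already provided by \cite[Lemma 4.6]{B01}) delivers the claimed nonempty open set.
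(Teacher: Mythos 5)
Your first part is correct, and in fact slightly more than the paper records: the pairing identity gives $\deg\L^{(i)}=l-n(i)$ with $n(i)=\#\{k: q\mid a_k i\}$, the function $n$ is constant on orbits of multiplication by $p$ modulo $q$, hence $B(\mathbf{a},0)=\sum_{i=1}^{q-1}\dim_\K H^1\big(\P^1,{\L^{(i)}}^{-1}\big)=g(C)$, and the ordinary locus is exactly the locus $\{\sigma=B(\mathbf{a},0)\}$, which is open by \cite[Lemma 4.6]{B01}. (The paper shortcuts this: since always $\sigma\le B(\mathbf{a},0)\le g$, a single ordinary member already forces $B(\mathbf{a},0)=g$ and openness, so the lemma reduces to nonemptiness.)

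The genuine gap is in your second part, which is the whole content of the lemma. The paper itself stresses that the open set of \cite[Lemma 4.6]{B01} ``could be empty,'' and Example \ref{prank0} of the same section shows that for other multiplicity data every member of the family has $p$-rank $0$; so no genericity argument can work unless the paired structure $\{a_j,q-a_j\}$ enters in an essential, verified way. Your plan --- specialize to a $\Z/q\Z$-symmetric configuration, diagonalize the $p$-linear maps $F_i$ on eigenspaces, and then claim a ``sufficiently generic symmetric configuration'' makes all scalar factors nonzero --- leaves exactly this point unproved: you give no argument that those factors are not identically zero on the symmetric locus (and you concede the verification is outstanding). It is also a risky route: cyclic covers with extra symmetry are quotients of Fermat-type curves and are frequently non-ordinary, so specializing to a symmetric member may land you inside the non-ordinary locus rather than outside it; and the asserted independence of the nonvanishing conditions from $q$ is unsubstantiated, since the Hasse--Witt matrices have size and entries depending on $q$, $p$ and the $a_j$. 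The paper replaces all of this by a degeneration argument in the style of \cite[Prop.\ 7.4]{B01}: take $2l$ lines in $\P^2$ with multiplicities $a_j$ and $q-a_j$ on the two lines through each of $l$ collinear nodes lying on a line $L$, blow up those nodes and one more point of $L$, and form the $q$-th root cover; the pencil through the extra point yields a fibration whose general fiber is a cyclic cover of $\P^1$ with data $\mathbf{a}$ and whose fiber over $L$ is a reduced grid $C_0$ of $ql$ rational curves, so its $p$-rank equals its toric rank $p_a(C_0)=(q-1)(l-1)=g(C)$; semicontinuity of the $p$-rank then forces the general fiber to be ordinary. Some such existence mechanism --- an explicitly invertible Hasse--Witt matrix or a degeneration to a totally degenerate curve --- is what your proposal still needs to supply.
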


\begin{proof}
By \cite[Lemma 4.6]{B01}, we need to prove existence of at least one such ordinary curve. We will use a degeneration argument as in \cite[Prop.7.4]{B01}. Basically, we are going to construct a degeneration of such cyclic covers into an ordinary stable curve. We will use again \S \ref{s2} to explain the construction.

Let us consider lines $\{L_1,\ldots,L_{2l} \}$ in $\P^2$ intersecting at only nodes, and having $l$ colinear nodes. We denote these nodes by $Q_1,\ldots,Q_l$, and the line containing them by $L$.


As in \S \ref{s2}, we consider the data $$\O_{\P^2} (a_1 L_1 + (q-a_1) L_2 + a_2 L_3 + \ldots + a_l L_{2l-1}+ (q-a_l) L_{2l}) \simeq \O_{\P^2}(l)^{\otimes q}$$ Let $\tau \colon Y \to \P^2$ be the blowup of all $Q_i$ and $P \in L$, $P \neq Q_i$ for all $i$. Let $E_i$ be the $(-1)$-curve over $Q_i$ for $1 \leq i \leq l$, and let $E_{l+1}$ be the $(-1)$-curve over $P$. Then $$\Big(\tau^*(\O_{\P^2}(l)) \otimes \O_Y(-\sum_{i=1}^{l+1} E_i)\Big)^{\otimes q} \simeq \O_Y(a_1 L_1 + (q-a_1) L_2 + \ldots + (q-a_l) L_{2l})$$

Let $f \colon X \to Y$ be the corresponding $q$-th root cover branch along $L_1+\ldots+L_{2l}$. Then, the trivial pencil at $P \in \P^2$ induces a fibration $g \colon X \to \P^1$ so that the fiber over $L$ in $X$ is a reduced nodal curve $C_0$ formed by a grid of $ql$ $\P^1$'s ($q$ ``vertical" and $l$ ``horizontal"). Notice also that the general fiber of $g$ is a cyclic cover of $\P^1$ with data $\textbf{a}$. As in \cite[\S7]{B01}, we have that for a general fiber $C$ of $g$, the inequality $\sigma(C) \geq p_a(C_0)$ holds, since all components of $C_0$ are $\P^1$'s. But $p_a(C_0)=(q-1)(l-1)=g(C)$, and so $\sigma(C)=g(C)$, this is, the general fiber of $g$ is ordinary.


\end{proof}

Notice that the lemma above shows existence of arbitrary high degree ordinary cyclic covers for any given $\K$ of characteristic $p$.

\subsection{Irregularity for certain ordinary fibrations} \label{s52}

\begin{theorem}
Let $X$ be a nonsingular projective surface over an algebraically closed field $\K$ of characteristic $p>0$. Let $h \colon X \to \P^1$ be a fibration with $h_* \O_X = \O_{\P^1}$. Assume that $h$ has a section and a nonsingular ordinary fiber, and that $X$ has no \'etale Galois covers for $\Z / p \Z$.

Then $H^1(X,\O_X)=0$.
\label{irregularity}
\end{theorem}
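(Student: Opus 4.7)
The plan is to combine Leray's spectral sequence for $h$ with an Artin--Schreier/Frobenius comparison, exploiting that $F$ acts bijectively on $H^{1}$ of an ordinary curve. Since $h_{*}\O_{X}=\O_{\P^{1}}$ and $H^{i}(\P^{1},\O_{\P^{1}})=0$ for $i\geq 1$, the Leray spectral sequence gives an $F$-equivariant isomorphism
\[
H^{1}(X,\O_{X})\;\cong\;H^{0}(\P^{1},\mathcal{H}), \qquad \mathcal{H}:=R^{1}h_{*}\O_{X}.
\]

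The Artin--Schreier exact sequence $0\to\Z/p\Z\to\O_{X}\xrightarrow{F-1}\O_{X}\to 0$ in the \'etale topology, together with the surjectivity of $F-1$ on $H^{0}(X,\O_{X})=\K$, yields $H^{1}_{\text{\'et}}(X,\Z/p\Z)\cong\ker(F-1\mid H^{1}(X,\O_{X}))$. The hypothesis of no \'etale $\Z/p\Z$-cover forces $\ker(F-1)=0$, and the Jordan-type decomposition of $p$-linear endomorphisms over the algebraically closed field $\K$ then implies that $F$ is nilpotent on $H^{1}(X,\O_{X})$.

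Let $t_{0}\in\P^{1}$ be the point with $C_{0}:=h^{-1}(t_{0})$ smooth and ordinary. Since $C_{0}$ is smooth, Grauert's base change theorem applies: $\mathcal{H}$ is locally free near $t_{0}$ and there is an $F$-equivariant isomorphism $\mathcal{H}\otimes\kappa(t_{0})\cong H^{1}(C_{0},\O_{C_{0}})$. The evaluation map $r\colon H^{0}(\P^{1},\mathcal{H})\to H^{1}(C_{0},\O_{C_{0}})$ is $F$-equivariant; since $F$ is nilpotent on the source but bijective on the target (ordinariness of $C_{0}$), we deduce $r\equiv 0$. The locus $U\subset\P^{1}$ of smooth ordinary fibers is open and non-empty (openness of smoothness plus lower semi-continuity of the $p$-rank), and the same reasoning shows that every global section of $\mathcal{H}$ vanishes at every $t\in U$.

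Decompose $\mathcal{H}=\mathcal{H}_{\mathrm{lf}}\oplus\mathcal{H}_{\mathrm{tor}}$ on the smooth curve $\P^{1}$. For the locally free summand, any global section vanishing at the infinite set $U$ is identically zero, so $H^{0}(\P^{1},\mathcal{H}_{\mathrm{lf}})=0$. The remaining step, which I expect to be the main technical obstacle, is to rule out global sections of the torsion summand $\mathcal{H}_{\mathrm{tor}}$, supported on the finite set $\P^{1}\setminus U$. Here the hypothesis that $h$ admits a section is crucial: combined with the presence of a smooth fiber, it should force $h$ to be cohomologically flat in degree zero, i.e., $h^{0}(C_{t},\O_{C_{t}})=1$ for every $t\in\P^{1}$, which makes $\mathcal{H}$ globally locally free, kills $\mathcal{H}_{\mathrm{tor}}$, and yields $H^{1}(X,\O_{X})=0$.
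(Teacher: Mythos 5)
Your proposal follows the paper's proof almost step for step: the Leray identification $H^1(X,\O_X)\simeq H^0(\P^1,R^1h_*\O_X)$, the translation of ``no \'etale $\Z/p\Z$-covers'' into nilpotence of the $p$-linear Frobenius on $H^1(X,\O_X)$ (the paper quotes Serre's semisimple/nilpotent decomposition where you use Artin--Schreier; these are the same fact), and the contradiction obtained by restricting a hypothetical nonzero class to the ordinary fibres, where Frobenius is bijective, are exactly the paper's steps. The one place you stop is the one you flag yourself: you never prove that $h$ is cohomologically flat in degree zero, i.e.\ that $h^0(X_t,\O_{X_t})=1$ for every $t$, so that $R^1h_*\O_X$ is locally free and your torsion summand is zero; you only assert that the section ``should force'' this. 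As written this is a genuine gap, since in characteristic $p$ torsion in $R^1h_*\O_X$ really does occur (wild fibres of elliptic fibrations), so some hypothesis must actively be used to exclude it.

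The gap is fillable, and the paper fills it as its very first step, which is why the locally-free/torsion splitting never appears there. A section meets each fibre in a single point lying on a component of multiplicity one, so no fibre of $h$ is a multiple fibre. By Zariski's lemma a connected non-multiple fibre is numerically $1$-connected, and a numerically $1$-connected effective divisor $D$ on a smooth surface has $H^0(D,\O_D)=\K$ in any characteristic; this is precisely what rules out wild fibres and gives $h^0(X_t,\O_{X_t})=1$ for all $t$. Since $\chi(\O_{X_t})$ is constant by flatness, $h^1(X_t,\O_{X_t})$ is constant too, so cohomology and base change (Grauert) makes $R^1h_*\O_X$ locally free, say $\bigoplus_i\O_{\P^1}(a_i)$, with base-change isomorphisms at every point; the Frobenius argument then shows all $a_i<0$, which is the paper's conclusion. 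Note also that your parenthetical ``combined with the presence of a smooth fibre'' is misplaced: the section alone yields cohomological flatness, and the smooth ordinary fibre is needed only in the Frobenius step. Supplying the short argument above (or a reference to numerical connectedness of non-multiple fibres) completes your proof, and with it your argument coincides with the paper's.
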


\begin{proof}

We assume that the genus of the generic fiber is bigger than zero, otherwise we are done. First we notice that, because of the section, every fiber $h^*(y)$ (with $y \in \P^1$) has a reduced component, and so $h^0(h^*(y), \O_{h^*(y)})=1$. By the cohomology and base change theorem \cite[p.290]{Hart77}, we have that $R^1 h_* \O_X$ is a locally free sheaf on $\P^1$, and so $R^1h_* \O_X \simeq \oplus_i \O_{\P^1}(a_i)$ for some $a_i \in \Z$ (see e.g. \cite[V \S 2]{Hart77}). On the other hand, by the Leray spectral sequence, we have $$H^1(X,\O_X) \simeq H^0(\P^1, R^1 h_* \O_X),$$ and so $H^1(X,\O_X) \simeq \bigoplus_i H^0(\P^1, \O_{\P^1}(a_i))$.

Following Serre's approach as above \cite{Se58}, we have the decomposition $H^1(X,\O_X) = H^1(X,\O_X)_{ss} \oplus H^1(X,\O_X)_n$ induced by the $p$-linear morphism $F \colon H^1(X,\O_X) \to H^1(X,\O_X)$. By assumption, there are no \'etale Galois covers of $X$ for $\Z / p \Z$. By \cite[\S16]{Se58} (see also \cite[p.322]{MOagII}), this implies that $H^1(X,\O_X)_{ss}=0$.

We now want to compare $H^1(X,\O_X)$ with $H^1(X_y,\O_{X_y})$ for fibers $X_y:=h^*(y)$, where $y \in \P^1$. First, we have the isomorphism $$\alpha_1 \colon H^1(X,\O_X) \to H^0(R^1h_*\O_X)$$ given by the Leray spectral sequence. We remark that $\alpha_1$ is induced by $H^1(X,\O_X) \to H^1(h^{-1}(U),\O_{h^{-1}(U)}) \to R^1h_*\O_X(U)$ for $U \subset \P^1$ open affine sets. For each $y \in \P^1$, we have $\alpha_2 \colon H^0(R^1h_*\O_X) \to R^1h_*\O_X \otimes k(y)$ given by restriction to $y \in \P^1$, where $k(y)\simeq \K$ is the residue field. Finally, we have the natural isomorphism $\alpha_3 \colon R^1h_*\O_X \otimes k(y) \to H^1(X_y,\O_{X_y})$ as in Grauert's theorem; see \cite[p.288]{Hart77}. The composition $\alpha_3 \circ \alpha_2 \circ \alpha_1 \colon H^1(X,\O_X) \to H^1(X_y,\O_{X_y})$ is simply given by the restriction, and so we have $$\alpha_3 \circ \alpha_2 \circ \alpha_1 \circ F(\gamma)= F_y \circ \alpha_3 \circ \alpha_2 \circ \alpha_1(\gamma),$$ where $F_y \colon H^1(X_y, \O_{X_y}) \to H^1(X_y, \O_{X_y})$ is the $p$-linear morphism corresponding to the Frobenius action on $X_y$.

We now suppose that some $a_i \geq 0$, and so there is a nonzero class $\gamma \in H^1(X,\O_X) \cong H^0(R^1h_*\O_X)$. Then there is an open set $U \subset \P^1$ such that $\alpha_2(\gamma) \in R^1h_*\O_X \otimes k(y)$ is not zero, for all $y \in U$; cf. \cite{Ray94}. Therefore, $\alpha_3 \circ \alpha_2 \circ \alpha_1(\gamma) \in H^1(X_y,\O_{X_y})$ is not zero for all $y \in U$. But since $h$ has an ordinary fiber, there is a nonempty open set $V$ of $\P^1$ such that $X_y$ is ordinary for $y \in V$. But then, for $y \in V \cap U$, we have a nonzero class $\alpha_3 \circ \alpha_2 \circ \alpha_1(\gamma) \in H^1(X_y,\O_{X_y})$ which is nilpotent under Frobenius, and this is a contradiction. Therefore we have $a_i<0$ for all $i$, and so $H^1(X,\O_X)=0$.
\end{proof}

\subsection{Density with reduced $\pic^0$} \label{s53}

We fix an algebraically closed field $\K$ of characteristic $p>0$.

\begin{proposition}
Given real numbers $\alpha >0$, and $0<\epsilon <<1$, there are integers $e,r,l >>0$ such that $$\Big| \frac{p^rle(2l-5)}{(2l-2)(el(2l-1)-2)+lep^r} - \alpha \Big| < \epsilon.$$

\label{limit}
\end{proposition}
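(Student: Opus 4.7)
The plan is to identify the correct asymptotic scaling among the three parameters $e, r, l$. Dividing numerator and denominator of the expression by $lep^r$ rewrites it as
$$E(e,r,l) = \frac{2l-5}{1 + \dfrac{(2l-2)(2l-1)}{p^r} - \dfrac{2(2l-2)}{l e p^r}}.$$
The last term in the denominator is a lower-order correction, negligible once $l$ and $e$ are large. The remaining behavior is governed by the ratio $p^r/l$: the ansatz $p^r = 2\alpha l$ yields $(2l-2)(2l-1)/p^r = (2l-3)/\alpha + O(1/l)$, so a direct substitution gives $E = \alpha\cdot(2l-5)/(2l-3+\alpha) + O(1/l) \to \alpha$ as $l \to \infty$. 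This pins down $p^r \sim 2\alpha l$ as the correct scaling.

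To realize this with integer parameters, I would first fix a large integer $r$, then set $l := \lfloor p^r/(2\alpha) \rfloor$. This forces $|p^r - 2\alpha l| < 2\alpha$, so $p^r/l = 2\alpha + O(1/l)$, and in particular $l \to \infty$ as $r \to \infty$. The parameter $e$ enters only through the small correction term $-2(2l-2)/(lep^r)$, so any sufficiently large integer $e$ works; one can take, e.g., $e := l$.

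The remaining step is a routine error estimate. Viewing $E$ as a rational function of $p^r$ with $e, l$ fixed, one checks that $|\partial E/\partial p^r| = O(1/l)$ uniformly in the relevant range (the derivative is $AB/(B+p^r C)^2$ with $A = le(2l-5)$, $B = (2l-2)(el(2l-1)-2)$, $C = le$, which has this order for $p^r \sim 2\alpha l$). Combined with the exact computation at $p^r = 2\alpha l$, this yields $|E(e,r,l) - \alpha| = O(1/l)$ with an implicit constant depending only on $\alpha$. Choosing $r$ (and hence $l$) sufficiently large therefore forces $|E - \alpha| < \epsilon$.

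The only real obstacle is careful bookkeeping of the error constants in the $O(1/l)$ terms, which is elementary; conceptually, the argument reduces entirely to the observation that $p^r \sim 2\alpha l$ is the correct asymptotic regime and that $e$ only contributes to subleading terms.
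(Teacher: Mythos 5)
Your argument is correct. You identify exactly the same governing regime as the paper does — the expression is asymptotically controlled by the ratio $p^{r}/(2l)$, and $e$ enters only through subleading terms — but you realize the ratio $p^{r}/l\approx 2\alpha$ by a different mechanism. The paper writes $l=xp^{y}$, $r=z+y$, so that $p^{r}/l=p^{z}/x$, and then needs a small density lemma (for $x\gg 0$ there is a power $p^{z}$ in the interval $[x(2\alpha-\tfrac{2\epsilon}{3}),\,x(2\alpha+\tfrac{2\epsilon}{3})]$, proved by showing consecutive intervals overlap), after which it chooses $y\gg 0$ and then $e\gg 0$ and concludes by a three-term triangle inequality with sequential limits. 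You instead fix $r$ large and set $l:=\lfloor p^{r}/(2\alpha)\rfloor$, $e:=l$, which gives $p^{r}/l=2\alpha+O(1/l)$ at once, and you replace the paper's sequential limits by a single uniform estimate: the bound $\bigl|\partial E/\partial t\bigr|=AB/(B+Ct)^{2}\le A/B=O(1/l)$ (valid for all $t\ge 0$, so uniformity is automatic) plus the evaluation at $t=2\alpha l$ yields $|E-\alpha|=O_{\alpha}(1/l)$. Your route buys simplicity — it bypasses the interval-overlap lemma and the nested choice of $x,z,y,e$, leaving only one free parameter $r$ — at the price of doing the quantitative bookkeeping explicitly; the paper's route defers all quantitative work to soft limit statements but needs the extra density argument to make $p^{r}/l$ land near $2\alpha$. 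Both are complete proofs of the proposition as stated, since it only requires the three integers to be large.
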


\begin{proof}
Let $l:=xp^y$ and $r:=z+y$. First we want to show that for $x>>0$ there are $z>0$ such that $|\frac{p^z}{2x} - \alpha| \leq \frac{\epsilon}{3}$. If $\beta=2\alpha - \frac{2\epsilon}{3}$ and $\gamma=2\alpha + \frac{2\epsilon}{3}$, then we want to show that for $x>>0$, there are $z>0$ so that $$x \beta \leq p^z \leq x \gamma $$ for a given pair $0<\beta<\gamma$. For this, it is enough to show that there is a positive integer $x_0$ such that for all $x \geq x_0$ we have $(x+1)\beta < x\gamma$. This is true because of $\lim_{x\to \infty} x/(x+1)=1$. So let us fix such $x>>0$ and $z>0$. We now fix $y>>0$ such that $$\Big| \frac{p^r(2l-5))}{4l^2-6l+2+p^r} - \frac{p^z}{2x} \Big| < \frac{\epsilon}{3}.$$ Finally, we take $e>>0$ such that $$\Big|\frac{p^rle(2l-5)}{(2l-2)(el(2l-1)-2)+lep^r} - \frac{p^r(2l-5)}{4l^2-6l+2+p^r} \Big| < \frac{\epsilon}{3}.$$ Then the claim is proved via the triangle inequality.
\end{proof}

Let $\alpha >0$, and $0< \epsilon < <1$. We fix integers $e,r,l >>0$ as in the previous proposition. Let $d=2l$, and consider the arrangement ${\AR}'_0 := {\AR}_0 \setminus \{S_{d+1} \}$ in the Hirzebruch surface $\FF_{ep^r}$, where ${\AR}_0$ and $S_{d+1}$ are as in \S \ref{s1}. This is, $$\AR'_0 = \{S_1,\ldots, S_d, F_1,\ldots, F_{\delta}\} $$ where $S_i \sim |S_{d+1} + ep^r F|$ with $F=\P^1$, $F^2=0$ and $S_{d+1}^2=-ep^r$, $S_i$ with $S_j$ intersect at $e$ points with multiplicity $p^r$ each, and each fiber $F_i$ passes through each of these intersection points, so $\delta=el(2l-1)$.

We can and do assume that there is a fiber $F_0$, distinct to the $F_1,\ldots,F_{\delta}$, such that $F_0$ intersected with the sections $S_1,\ldots,S_{2l}$ forms a collection of points which belongs to the (nonempty) open set of Lemma \ref{ordinary}.

Let $q>p$ be a large enough prime, so that we have partitions (by positive integers) \begin{equation} \label{equ} a_1+\ldots+a_l =q \ \ \ \text{and} \ \ \ y_1+\ldots+y_{\delta} =q.\end{equation} We consider the linear equivalences in $\FF_{ep^r}$ given by $$a_1 S_1 + (q-a_1) S_2+ \ldots + a_l S_{d-1}+ (q-a_l) S_d \sim ql \big(S_{d+1}+ep^r F \big),$$ and $y_1 F_1 + \ldots + y_{\delta} F_{\delta} \sim q F$. Hence, if $\L' := \O_{\FF_{ep^r}}(lS_{d+1} +(lep^r+1)F)$, we obtain $$\O_{\FF_{ep^r}}\Big(\sum_{i=1}^l a_i S_{2i-1} + \sum_{i=1}^l (q-a_i) S_{2i} + \sum_{i=1}^{\delta} y_i F_i \Big) \simeq {\L'}^q.$$

Let $\sigma \colon Y \to \FF_{ep^r}$ be the minimal log resolution of $\AR'_0$. In \cite[\S8]{Urz11} we prove that we can assign solutions of the above equations \eqref{equ} to the corresponding curves in $\AR'_0$, so that the pair $(\FF_{ep^r},\AR'_0)$ is asymptotic (as in \S \ref{s3}), \underline{except} for the $el$ points of intersection between $S_{2i-1}$ and $S_{2i}$ for $1 \leq i \leq l$ (below we will add their contribution to Chern numbers). Here $\L:= \sigma^*(\L')$, and $$D:=\sum_j \nu_j D_j:= \sigma^* \Big(\sum_{i=1}^l a_i S_{2i-1} + \sum_{i=1}^l (q-a_i) S_{2i} + \sum_{i=1}^{\delta} y_i F_i \Big).$$  As before, using $\L^q \simeq \O_Y(D)$, we have the $q$-th root cover $f=f_2 \circ f_3 \colon X \to \overline{W} \to Y$ where $f_3 \colon X \to \overline{W}$ is the resolution of singularities of the normal projective surface $\overline{W}$ constructed in \S \ref{s3} via the finite morphism $f_2 \colon \overline{W} \to Y$.

For the points over the intersection between $S_{2i-1}$ and $S_{2i}$, for $1 \leq i \leq l$, we will have $elp^r$ nodes in $Y$ which become $elp^r$ rational double point singularities of type $A_{q-1}$ in $ \overline{W}$. Indeed, if we consider one $F_j$ passing through one of the points in $S_{2i-1} \cap S_{2i}$, and if $G_{k,j}$ are the exceptional curves over $F_j \cap S_{2i-1} \cap S_{2i}$ for $\sigma$, then $$\sigma^*(y_jF_j+ a_i S_{2i-1} + (q-a_i) S_{2i})= y_jF_j+ a_i S_{2i-1} + (q-a_i) S_{2i} + \sum_{k=1}^{ep^r} (qk+y_j)G_{k,j}.$$ These singularities $A_{q-1}$ contribute to the Chern numbers as (see formulas at the end of \S \ref{s2}): $c(q-1,q)=2-\frac{2}{q}$ and $l(q-1,q)=q-1$.

Therefore, asymptotically the Chern ratio of $X$ is only affected on $c_2$, and so when $q>>0$ and multiplicities from equations \eqref{equ} are ``randomly chosen", we obtain $$\text{lim}_{q \to \infty} \frac{c_1(X)}{c_2(X)} = \frac{\bar{c}_1^2(\FF_{ep^r},\AR'_0)}{\bar{c}_2(\FF_{ep^r},\AR'_0) + dep^r} = 2 + \frac{p^rle(2l-5)}{(2l-2)(el(2l-1)-2)+lep^r}$$ which is arbitrarily close to $2+\alpha$ by Proposition \ref{limit}.

\begin{proposition}
We have that $K_{\overline{W}}$ is nef.
\label{nef2}
\end{proposition}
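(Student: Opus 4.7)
The plan is to follow the strategy of Proposition \ref{nef} essentially verbatim, adapted to the simpler arrangement $\AR'_0$ (which drops $S_{d+1}$, the $H_i$, and the $R_j$ from the branch divisor). The starting point is again
\[ qK_{\overline{W}} \equiv f_2^*\bigl(qK_Y + (q-1)D_{\text{red}}\bigr). \]
By the projection formula together with the pullback relations $q\overline{C} \equiv f_2^*C$ for each branch component $C\subset D_{\text{red}}$, and $\overline{F}\equiv f_2^*F$ for a general fiber (and for $S_{d+1}$, which is \emph{not} in the branch locus), nefness of $K_{\overline{W}}$ reduces to checking that $(qK_Y+(q-1)D_{\text{red}})\cdot C \geq 0$ for each irreducible $C$ on $Y$ whose image in $\overline{W}$ has (possibly) negative self-intersection, namely $C\in\{S_i, G_{i,j}, F_i, F, S_{d+1}\}$.

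On $Y$, with $g=0$, we have $K_Y\equiv -2S_{d+1}-(2+ep^r)F+\sum_{i=1}^{\delta}E_i$ (where $E_i$ is the canonical-divisor contribution from the blowup chain over the $i$-th tangency point) and $D_{\text{red}}\equiv\sum_{i=1}^d S_i+\sum_{i=1}^{\delta}F_i+\sum_{i,j}G_{i,j}$. Since $S_{d+1}$ no longer appears in $D_{\text{red}}$, the term $-2qS_{d+1}$ in $qK_Y$ must be absorbed purely through the identity $\sum_{i=1}^d S_i\equiv dS_{d+1}+dep^rF-2\sum_i E_i$. Splitting $(q-1)\sum_{i=1}^d S_i$ into two equal halves and substituting this identity in one of them yields a $\Q$-effective decomposition of $qK_Y+(q-1)D_{\text{red}}$ whose coefficient on $S_{d+1}$ is $\tfrac{q(d-4)-d}{2}$, positive for $q\gg 0$ because $d=2l\geq 6$; the other coefficients — $\tfrac{q-1}{2}$ on each $S_i$, $q-1$ on each $F_i$, roughly $\tfrac{(q-1)dep^r}{2}-q(2+ep^r)$ on $F$, and $j+(q-1)$ on each $G_{i,j}$ — are all manifestly positive for $q$ large.

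With this decomposition in hand, the intersection computations for $C\in\{S_i,G_{i,j},F_i,F\}$ are of exactly the same flavor as in Proposition \ref{nef}, and all yield expressions that are $\Theta(q)$ with positive leading term. The main obstacle is the genuinely new case $C=S_{d+1}$, which is not a branch component; here one has $f_2^*S_{d+1}=\overline{S_{d+1}}$, and the preimage is irreducible since $f_2|_{\overline{S_{d+1}}}$ is a cyclic cover of $S_{d+1}\cong\P^1$ branched at the $\delta$ transverse intersections $S_{d+1}\cap F_i$. Using $S_{d+1}^2=-ep^r$, $S_{d+1}\cdot S_i=0$ for $1\leq i\leq d$, $S_{d+1}\cdot F_i=1$, and $S_{d+1}\cdot E_i=0$, a direct projection-formula calculation gives
\[ K_{\overline{W}}\cdot\overline{S_{d+1}} \;=\; (qK_Y+(q-1)D_{\text{red}})\cdot S_{d+1} \;=\; q(ep^r-2)+(q-1)\delta, \]
which is positive for $q\gg 0$ since $\delta=el(2l-1)$. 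Together with the previous checks, this establishes that $K_{\overline{W}}$ is nef.
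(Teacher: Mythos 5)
Your argument is essentially the paper's own proof: the same identity $qK_{\overline{W}} \equiv f_2^*(qK_Y+(q-1)D_{\text{red}})$, the same substitution $\sum_{i=1}^d S_i \equiv dS_{d+1}+dep^rF-2\sum_i E_i$ to get a $\Q$-effective decomposition (your coefficient $\tfrac{q(d-4)-d}{2}$ on $S_{d+1}$ equals the paper's $(q-1)l-2q$), the same reduction to intersecting with the curves in the support, and the same key computation $K_{\overline{W}}\cdot\overline{S_{d+1}}=q(ep^r-2)+(q-1)\delta>0$ for the non-branch curve $S_{d+1}$. The only slight overstatement is that not all remaining checks are positive of order $q$: the intermediate exceptional curves $G_{i,j}$ give intersection exactly $0$ (as the paper records), which of course still suffices for nefness.
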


\begin{proof}
We know that $qK_{{\overline{W}}} \equiv f_2^*\Big(qK_Y+(q-1) D_{\text{red}} \Big).$ Let $P_1, \ldots, P_{\delta}$ be the points in $\FF_{ep^r}$ where pairs of sections in $\{S_1,\ldots,S_d \}$ intersect. Let $G_{i,j}$ be the exceptional curves over $P_i$ for $\sigma$, and let $E_i := \sum_{j=1}^{p^r} j G_{i,j}$. In this way $$K_Y \equiv -2 S_{d+1} -(2+ep^r) F + \sum_{i=1}^{\delta} E_i $$ where $F$ is the general fiber of $\pi \circ \sigma \colon Y \to \FF_{ep^r} \to \P^1$. We recall that we always use the same letter for the strict transform of a curve.

Note that $$D_{\text{red}} \equiv \sum_{i=1}^{d} S_i + \sum_{i=1}^{\delta} F_i + \sum_{i=1}^{\delta} \sum_{j=1}^{p^r} G_{i,j} \equiv \sum_{i=1}^{d} S_i + \delta F,$$ and $\sum_{i=1}^d S_i \equiv d S_{d+1} + d e p^r F - 2 \sum_{i=1}^{\delta} E_i .$ Hence $$qK_Y + (q-1) D_{\text{red}} \equiv \big((q-1)l-2q \big) S_{d+1} + \frac{q-1}{2} \sum_{i=1}^d S_i + \sum_{i=1}^{\delta} E_i + t F $$ where $t:= (l-1)qep^r-2q-dep^r+\delta (q-1)>0$ for $q>>0$. In this way, the divisor $qK_{{\overline{W}}}$ has $\Q$-effective support on $\overline{S_i}:=f_2^{-1}(S_i)$, $\overline{S_{d+1}}:=f_2^{-1}(S_{d+1})$, $\overline{G_{i,j}} := f_2^{-1}(G_{i,j})$, and $\overline{F}:= f_2^{-1}(F)$ for all $i,j$. We have the pullback relations $q\overline{S_i}\equiv f_2^*(S_i)$, $\overline{S_{d+1}}\equiv f_2^*(S_{d+1})$, $q\overline{G_{i,j}} \equiv f_2^*(G_{i,j})$, and $\overline{F} \equiv f_2^*(F)$.

To prove that $K_{\overline{W}}$ is nef, it is enough to show that $K_{\overline{W}} \cdot \Gamma \geq 0$ for $\Gamma \in \{ \overline{S_i},\overline{G_{i,j}},\overline {S_{d+1}}, \overline{F} \}$. It is then enough to intersect $qK_Y + (q-1) D_{\text{red}}$ with curves in $\{ S_i,G_{i,j},S_{d+1},F \}$. We get

\begin{itemize}
\item $\big( qK_Y + (q-1) D_{\text{red}} \big) \cdot S_{d+1}= q(ep^r-2)+\delta(q-1) >0$,

\item $\big( qK_Y + (q-1) D_{\text{red}} \big) \cdot S_i = \frac{q-1}{2}(ep^r - (d-1)ep^r) + t + \delta p^r >0$ for $1 \leq i \leq d$,

\item $\big( qK_Y + (q-1) D_{\text{red}} \big) \cdot F >0$ since $F^2=0$ and intersects curves curves in the support,

\item $\big( qK_Y + (q-1) D_{\text{red}} \big) \cdot G_{i,1} =t >0$,

\item $\big( qK_Y + (q-1) D_{\text{red}} \big) \cdot G_{i,j} =0$ for $1<j<p^r$,

\item $\big( qK_Y + (q-1) D_{\text{red}} \big) \cdot G_{i,p^r} =q-2>0$,

\end{itemize}

for $q >>0$.

\end{proof}

We now observe that we can apply asymptotic minimality to $X$ (Theorem \ref{asymptoticallyminimal}) since the contributions of $c(q-1,q)$ is $2 - \frac{2}{q}$ (see proof of Theorem \ref{asymptoticallyminimal}).

\begin{theorem}
Let $\K$ be an algebraically closed field of characteristic $p>0$. Then, for any real number $x \geq 2$, there are minimal surfaces of general type $X'$ over $\K$ such that
\begin{itemize}
\item[a)] $c_1^2(X')>0, c_2(X')>0$,

\item[b)] $\pi_1^{\text{\'et}}(X')$ is trivial,

\item[c)] $H^1(X',\O_{X'})=0$,

\item[d)] and $c_1^2(X')/c_2(X')$ is arbitrarily close to $x$.
\end{itemize}
\label{main2}
\end{theorem}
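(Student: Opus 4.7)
The overall strategy is to take the construction developed in \S\ref{s53}, verify the three conditions of the statement for the resulting minimal model $X'$, and invoke Theorem \ref{irregularity} to control the irregularity. The Chern slope estimate is essentially already done: Proposition \ref{nef2} gives $K_{\overline{W}}$ nef, and although one pair $A_{q-1}$-contribution from each of the $el$ points $S_{2i-1}\cap S_{2i}$ is not ``asymptotic'' in the sense of Definition \ref{asym}, the values $c(q-1,q)=2-\tfrac{2}{q}$ and $l(q-1,q)=q-1$ fit into the argument of Theorem \ref{asymptoticallyminimal} (indeed, the bound on $s$ still goes to $0$ when divided by $q$). Combined with Proposition \ref{limit} and the computed limit slope $2+\tfrac{p^r l e (2l-5)}{(2l-2)(el(2l-1)-2)+lep^r}$, this gives $|c_1^2(X')/c_2(X')-x|<\epsilon$ for $q\gg 0$, proving (d). Positivity of $c_1^2(X')$ follows because $X'$ is minimal of general type, and positivity of $c_2(X')$ will follow from (b) together with Shepherd-Barron's classification \cite{SB91}, as recalled in the introduction.

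For (b), I would apply Corollary \ref{corlemma} from the appendix to the natural fibration $h\colon X\to \P^1$ induced by $\pi\circ\sigma\circ f$. By construction, the strict transforms of the sections $S_i$ are sections of $h$, and the preimage under $f$ of $\sigma^*(F_j)$ for $1\leq j\leq \delta$ is a connected fiber whose underlying reduced scheme is a tree of $\P^1$'s (since it lies over a configuration of rational curves in $Y$ and the $q$-th root cover over it is a tree of rational curves). These are nonmultiple fibers whose reduced structure has trivial $\pi_1^{\text{ét}}$, so by Corollary \ref{corlemma}, $\pi_1^{\text{ét}}(X)\simeq \pi_1^{\text{ét}}(\P^1)=1$. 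Since the minimal model $X'$ is obtained from $X$ by contracting $(-1)$-curves, and blowups of nonsingular points preserve $\pi_1^{\text{ét}}$, we get $\pi_1^{\text{ét}}(X')=1$.

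For (c), the plan is to apply Theorem \ref{irregularity} to $h\colon X\to\P^1$. The hypotheses $h_*\O_X=\O_{\P^1}$ and existence of a section are immediate from the construction. The nontrivial hypothesis is the existence of a nonsingular ordinary fiber. Here I use the fiber $F_0$ singled out in \S\ref{s53}: because $F_0$ is distinct from $F_1,\dots,F_\delta$, it meets $\AR'_0$ only transversally in the $2l$ points $F_0\cap S_i$, so the scheme-theoretic fiber of $h$ over the image point is exactly the cyclic $q$-cover of $F_0\simeq\P^1$ branched at these $2l$ points with multiplicities $\{a_1,q-a_1,\dots,a_l,q-a_l\}$ coming from the partition of $q$ used to build $D$. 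By the choice of $F_0$ in the open set of Lemma \ref{ordinary}, this cyclic cover is a nonsingular ordinary curve. Finally, $\pi_1^{\text{ét}}(X)=1$ shows $X$ has no nontrivial étale Galois covers at all, in particular none with group $\Z/p\Z$. Theorem \ref{irregularity} then gives $H^1(X,\O_X)=0$, and by birational invariance for smooth projective surfaces, $H^1(X',\O_{X'})=H^1(X,\O_X)=0$.

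The main obstacle I expect is the ordinary-fiber verification, i.e.\ ensuring that the weighted partition of $q$ used to make $(\FF_{ep^r},\AR'_0)$ asymptotic is \emph{simultaneously} compatible with the ordinariness of the fiber over $F_0$. The asymptoticity constraint from \cite[Thm.8.1]{Urz11} rules out only a set of partitions of density tending to $0$ as $q\to\infty$, whereas the ordinariness, for fixed multiplicities $\{a_i,q-a_i\}$, holds on a nonempty open set in the configuration of branch points on $F_0$ (Lemma \ref{ordinary}), depending only on the multiplicities, not on $q$; so one can first pick the $S_i$ and $F_0$ generically so that $F_0\cap\sum S_i$ lies in this open set for \emph{every} relevant choice of multiplicities, and then pick the partition $\{a_i\}$ (and the $\{y_j\}$) of the large prime $q$ asymptotic for $(\FF_{ep^r},\AR'_0)$. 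Once this independence is verified cleanly, the rest of the proof is assembly of the ingredients already developed.
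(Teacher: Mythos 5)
Your proposal is correct and follows essentially the same route as the paper: the construction of \S\ref{s53} with Proposition \ref{nef2}, Theorem \ref{asymptoticallyminimal} and Proposition \ref{limit} for (a) and (d), Corollary \ref{corlemma} applied to the fibration over $\P^1$ for (b), and Theorem \ref{irregularity} together with the ordinary fiber over $F_0$ (Lemma \ref{ordinary}) for (c). The compatibility issue you flag is exactly what the paper dispatches with ``we can and do assume''; just note that the open set of Lemma \ref{ordinary} does depend on $q$ and the chosen partition (not only on the abstract multiplicities), so the cleanest ordering is to fix $q$ and the partition first and then choose the sections and $F_0$ generically, which is harmless since the Chern-number asymptotics depend only on the combinatorial data.
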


\begin{proof}
Let $x=2 +\alpha$, and let $X'$ be the minimal model of the surfaces $X$ constructed above. We already have $a)$ and $d)$. To prove $b)$ and $c)$, we consider the fibration $g \colon X \to \P^1$ induced by $\pi \circ \sigma \colon Y \to \FF_{ep^r} \to \P^1$. Notice that $g$ has sections given by the pre-images of the $S_i$, $1 \leq i \leq d$. Also, the fibers of $g$ over the fibers $F_i$ of $\FF_{ep^r} \to \P^1$ give nonmultiple fibers which consists of trees of $\P^1$'s. So, by the Appendix, the surface $X$ is \'etale simply connected, and so is $X'$. Moreover, since the pre-image of $F_0$ is an ordinary fiber of $g$, by Theorem \ref{irregularity}, we have that $H^1(X,\O_X)=0$, and so $H^1(X',\O_{X'})=0$.
\end{proof}

\begin{remark}
In the previous theorem, we compute $H^1(X',\O_{X'})=0$ via Theorem \ref{irregularity} using that $X$ has a fibration to $\P^1$ with a simply connected fiber, a section and an ordinary fiber. If Theorem \ref{irregularity} is true for fibrations over arbitrary curves $C$ with a simply connected fiber, a section and an ordinary fiber, then it may be possible to modify the construction given in this section to prove Theorem \ref{main} with reduced Picard scheme $\pic^0(X')$ isomorphic to Jac$(C)$. Finally, if in addition we can drop the assumption on ordinary fiber (or make it milder) in Theorem \ref{irregularity}, then we could prove Theorem \ref{main} with reduced Picard scheme $\pic^0(X')$ isomorphic to Jac$(C)$ with no necessity of most of the work done in this section. A Theorem \ref{irregularity} with no such assumption could also give an approach to prove Kawamata-Viehweg vanishing theorem for rational surfaces in positive characteristic, even if we only have it for $C=\P^1$. This will be part of a future work.
\end{remark}

\section*{Appendix. \'Etale fundamental group of a fibration \\ By Rodrigo Codorniu} \label{ap}

The main purpose of this section is to give a tool to compute the \'etale fundamental group $\pi_1^{\text{\'et}}$ of certain fibered surface, similar to the one we have over $\C$ for the topological fundamental group $\pi_1^{\text{top}}$; see  \cite[p.311]{Nori83}, \cite[p.600]{Xiao91}. This tool may be well known to specialists. For recent general developments see \cite{Mit15}. A particular case over the complex numbers is the following.

\begin{lemma}
Let $X$ (resp. $C$) be a nonsingular projective surface (resp. curve) over $\C$. Let $h \colon X \to C$ be a surjective morphism such that $h_* \O_X = \O_C$. Assume that $h$ has no multiple fibers. Then, for every $x \in X$, we have the exact sequence  \begin{center}
$\pi_1^{\text{top}}(h^{-1}(h(x)),x) \rightarrow \pi_1^{\text{top}}(X,x) \rightarrow \pi_1^{\text{top}}(C,h(x)) \rightarrow 1$
\end{center} \label{xiao}
\end{lemma}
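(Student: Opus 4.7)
The plan is to reduce to the case of a smooth proper fibration using Ehresmann's theorem, and then kill the discrepancy introduced by the finitely many singular fibres by exploiting the no-multiple-fibre hypothesis. Let $\Delta\subset C$ be the finite set of critical values of $h$, and set $C^\circ:=C\setminus\Delta$, $X^\circ:=h^{-1}(C^\circ)$. Since $X\setminus X^\circ$ is a divisor in $X$ (hence of real codimension two), the inclusion $X^\circ\hookrightarrow X$ is $\pi_1^{\text{top}}$-surjective, and we may move the base point into $X^\circ$. The restriction $h\vert_{X^\circ}\colon X^\circ\to C^\circ$ is a proper $C^\infty$-submersion, hence a locally trivial fibre bundle by Ehresmann. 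The fibre $F=h^{-1}(h(x))$ is connected because $h_*\O_X=\O_C$ (Stein factorisation), so the long exact sequence of homotopy groups of a fibration gives
\[
\pi_1^{\text{top}}(F,x)\to\pi_1^{\text{top}}(X^\circ,x)\to\pi_1^{\text{top}}(C^\circ,h(x))\to 1.
\]

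The next step is to compare this with the desired sequence through the commutative diagram whose vertical arrows are the canonical surjections $\pi_1^{\text{top}}(X^\circ)\twoheadrightarrow\pi_1^{\text{top}}(X)$ and $\pi_1^{\text{top}}(C^\circ)\twoheadrightarrow\pi_1^{\text{top}}(C)$; write $K$ and $N$ for their respective kernels. The group $N$ is normally generated by the meridian loops $\gamma_p\in\pi_1^{\text{top}}(C^\circ)$ for $p\in\Delta$, and $K$ is normally generated by meridian loops $\mu_{i,p}$ around the irreducible components $E_i$ of each singular fibre $h^{-1}(p)=\sum_i m_iE_i$. A local computation in holomorphic coordinates near a smooth point of $E_i$ not lying on any other component shows that $h_*(\mu_{i,p})=\gamma_p^{m_i}$, because the fibre equation on such a neighbourhood takes the form $w^{m_i}\cdot(\text{unit})$ in a transverse slice.

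The no-multiple-fibre hypothesis asserts that $\gcd_i(m_i)=1$ for every $p\in\Delta$. Thus the powers $\{\gamma_p^{m_i}\}_i$ generate $\gamma_p$ inside the cyclic subgroup $\langle\gamma_p\rangle$, so $\gamma_p\in h_*(K)$. Since $K$ is normal in $\pi_1^{\text{top}}(X^\circ)$ and $h_*$ is surjective, $h_*(K)$ is a normal subgroup of $\pi_1^{\text{top}}(C^\circ)$ containing every $\gamma_p$, hence $N\subseteq h_*(K)$; the reverse inclusion is automatic from the commutativity of the diagram, giving $h_*(K)=N$. A brief diagram chase then concludes: for $\alpha\in\ker\bigl(\pi_1^{\text{top}}(X)\to\pi_1^{\text{top}}(C)\bigr)$, lift to $\alpha^\circ\in\pi_1^{\text{top}}(X^\circ)$, use $h_*(\alpha^\circ)\in N=h_*(K)$ to find $\kappa\in K$ with $h_*(\kappa)=h_*(\alpha^\circ)$, and observe that $\alpha^\circ\kappa^{-1}\in\ker(h_*)=\mathrm{Im}(\pi_1^{\text{top}}(F))$ projects down to $\alpha$ in $\pi_1^{\text{top}}(X)$.

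The most delicate step is the meridian identity $h_*(\mu_{i,p})=\gamma_p^{m_i}$: one must base $\mu_{i,p}$ at a smooth point of $E_i$ avoiding every other component and every self-intersection of $E_i$, so that the fibre $h^*(p)$ is locally cut out by $w^{m_i}\cdot(\text{unit})$ and the meridian in the $w$-plane indeed projects to the $m_i$-th power of a loop around $p\in C$. Everything else is routine, and the same strategy will extend to the \'etale setting needed in the body of the paper once Ehresmann is replaced by the \'etale analogue \cite[Exp.X Cor.1.4]{Gro71} cited in the introduction, together with the reduction from nonmultiple to reduced fibres announced there.
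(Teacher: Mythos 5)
The paper never actually proves Lemma \ref{xiao}: it is recorded as a known statement with the proof ``See e.g.\ \cite{Xiao91}'', and the argument the paper really develops is the \'etale analogue in the appendix. So your proposal supplies an honest proof where the paper defers to the literature, and it follows the standard topological route (essentially that of Nori and Xiao): restrict to $C^{\circ}=C\setminus\Delta$, apply Ehresmann and the homotopy sequence of the resulting bundle with connected fiber, and compare the two surjections $\pi_1^{\text{top}}(X^{\circ})\twoheadrightarrow\pi_1^{\text{top}}(X)$, $\pi_1^{\text{top}}(C^{\circ})\twoheadrightarrow\pi_1^{\text{top}}(C)$ via meridians. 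The steps are correct as written: the kernels are indeed normally generated by meridians, the local identity $h_*(\mu_{i,p})=\gamma_p^{m_i}$ holds up to conjugation (which is all you need, since $K$, $N$ and $h_*(K)$ are normal), and $\gcd_i(m_i)=1$ then gives $h_*(K)=N$, after which your diagram chase is fine.

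The one genuine omission is the quantifier ``for every $x\in X$''. The fiber in the sequence is $F=h^{-1}(h(x))$, so when $h(x)\in\Delta$ it is a singular fiber, and ``moving the base point into $X^{\circ}$'' silently replaces that fiber by a smooth one; your argument as written only proves exactness for $h(x)\notin\Delta$. The standard patch: for a singular fiber $F_0=h^{-1}(p_0)$, the preimage $h^{-1}(D)$ of a small disk $D\ni p_0$ containing no other critical value deformation retracts onto $F_0$, and a nearby smooth fiber lies in $h^{-1}(D)$; hence (after joining base points by a path in $h^{-1}(D)$) the image of $\pi_1^{\text{top}}(F_0,x)\to\pi_1^{\text{top}}(X,x)$ contains the image of $\pi_1^{\text{top}}$ of a smooth fiber, while it is contained in $\ker\bigl(\pi_1^{\text{top}}(X)\to\pi_1^{\text{top}}(C)\bigr)$ because $h|_{F_0}$ is constant; combined with the smooth-fiber case this yields exactness at singular fibers as well. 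This case is not a pedantic one here: the \'etale counterpart in the appendix is applied precisely with the distinguished fiber singular (a tree of $\P^1$'s), so a complete proof must cover it. Your closing remark that the \'etale setting is handled by \cite[Exp.X Cor.1.4]{Gro71} also needs the paper's extra reduction from reduced to merely nonmultiple fibers, which is exactly what the appendix proves.
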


\begin{proof}
See e.g. \cite{Xiao91}.
\end{proof}

In \cite[Exp.X Cor.1.4]{Gro71}, we have the \'etale analogue of Lemma \ref{xiao} when $h$ is assumed to be separable \cite[Exp.X Def.1.1]{Gro71}. In our situation this means that all fibers are reduced. We will prove it assuming that $h$ has no multiple fibers. For that, we slightly modify the argument given in \cite[Ch.VI]{Mu67}. The key is to give a proof of \cite[Exp.X Prop.1.2]{Gro71} (or \cite[Th.6.2.1]{Mu67}) in our setting.

We follow definitions and notations as in \cite{Mu67} but emphasizing our particular interest. Let us fix an algebraically closed field $\K$. For a locally noetherian connected scheme $S$, let $(\text{Et}/S)$ be the category of \'etale coverings of $S$. Given a $\K$-point $s \in S$, one has the covariant fiber functor $$ F \colon (\text{Et}/S) \rightarrow (\text{finite sets}), \ \ F(X \rightarrow S)= \{ \K \text{-points of } X \ \text{over} \ s \},$$ which, roughly speaking, defines the \'etale fundamental group $\pi_1^{\text{\'et}}(S,s)$ of $S$ at $s$ as the group of automorphisms of $F$; see \cite{Gro71}, \cite{Mu67}, or \cite{Sza09} for a precise definition. Also, this group can be seen as the inverse limit of the automorphism groups of the ``Galois \'etale coverings'' of $S$. Hence the \'etale fundamental group is profinite.

Recall that any morphism of schemes $f \colon Y \rightarrow X$ induces a morphism of fundamental groups $f_{*} \colon \pi_{1}^{\text{\'et}}(Y,y) \rightarrow \pi_{1}^{\text{\'et}}(X,f(y))$ through essentially base change of \'etale coverings of $X$. All morphism of \'etale fundamental groups in this section will be of this type.

The following is a well-known fact.

\begin{proposition}
Let $T$ be tree of projective lines over $\K$. This is, $T_{\text{red}}$ is a connected nodal curve with a finite number of components, each isomorphic to $\P_{\K}^1$, and whose dual graph is a tree. Then its \'etale fundamental group is trivial.
\label{tree}
\end{proposition}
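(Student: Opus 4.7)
The plan is to proceed by induction on the number $n$ of irreducible components of $T$, after first reducing to $T = T_{\text{red}}$, which is harmless since the category of \'etale covers of $T$ is equivalent to that of $T_{\text{red}}$.

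For the base case $n=1$, $T \cong \P_\K^1$, and a connected finite \'etale cover $X \to \P_\K^1$ is a smooth projective curve of some genus $g$; Riemann--Hurwitz for an unramified cover of degree $d$ gives $2g - 2 = -2d$, forcing $d = 1$, so $\pi_1^{\text{\'et}}(\P_\K^1) = 1$. For the inductive step with $n \geq 2$, the dual graph of $T$, being a tree, has a leaf; let $C \cong \P_\K^1$ be the corresponding irreducible component, let $T'$ be the union of the remaining $n-1$ components (itself a tree of projective lines, hence in the scope of the inductive hypothesis), and let $\{p\} = T' \cap C$ be the unique node joining them. For a finite \'etale cover $f \colon X \to T$, the restrictions $X' := f^{-1}(T') \to T'$ and $X_C := f^{-1}(C) \to C$ are \'etale and therefore trivial by the two inductive hypotheses: $X' = \bigsqcup_{i \in I} T'_i$ and $X_C = \bigsqcup_{j \in J} C_j$ with $|I| = |J| = \deg f$.

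The remaining task is to glue these trivializations across $f^{-1}(p)$. The key local input is that an \'etale morphism induces isomorphisms on completed local rings: for $q \in f^{-1}(p)$, we have $\widehat{\O}_{X,q} \simeq \widehat{\O}_{T,p} \simeq \K[[u,v]]/(uv)$, so $q$ is again a node whose two analytic branches map isomorphically to the two branches at $p$, one lying in $X'$ and the other in $X_C$. This associates to each point of $f^{-1}(p)$ a unique pair $(T'_i, C_j)$ meeting there, yielding a bijection $\varphi \colon I \to J$; gluing $T'_i$ to $C_{\varphi(i)}$ at the point over $p$ exhibits $X$ as a disjoint union of $\deg f$ copies of $T$. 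If $X$ is assumed connected, then $\deg f = 1$, $X \to T$ is an isomorphism, and $\pi_1^{\text{\'et}}(T) = 1$.

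The main obstacle is the local analysis at the node. The argument rests on the standard fact that \'etale maps become isomorphisms on completions, preserving both the nodal singularity type and the partition of its two local branches. Once this compatibility is secured, the conclusion reduces to a purely combinatorial matching of the two families of copies produced by the inductive hypotheses, together with the connectedness of $X$.
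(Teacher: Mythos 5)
Your argument is correct. Note, however, that the paper itself offers no proof of Proposition \ref{tree}: it is stated as ``a well-known fact'' and used as a black box, so there is no argument of the author's to compare with; your write-up supplies exactly the kind of standard d\'evissage that is being taken for granted. The structure is sound: the reduction to $T_{\text{red}}$ is justified by the invariance of the \'etale site under nilpotent thickenings; the base case $\pi_1^{\text{\'et}}(\P^1_{\K})=1$ via Riemann--Hurwitz is valid in any characteristic because an \'etale cover is separable and unramified, so $2g-2=-2d$ forces $d=1$; and the inductive step correctly exploits that the dual graph is a tree, which is precisely what guarantees that a leaf component $C$ meets the rest $T'$ in a single node $p$ (no loops or double edges), and that $T'$ is again a connected tree of lines. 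The local statement you invoke --- that \'etale morphisms induce isomorphisms $\widehat{\O}_{T,p}\simeq\widehat{\O}_{X,q}\simeq\K[[u,v]]/(uv)$ carrying the ideal of $C$ (resp.\ $T'$) to one branch at $q$ --- is the right key fact and does identify the two branches at $q$ with one component of $f^{-1}(C)$ and one of $f^{-1}(T')$.

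Two small points deserve an explicit sentence, though neither is a genuine gap. First, for $\varphi\colon I\to J$ to be a bijection you should say that each $T'_i$ and each $C_j$ contains exactly one point of $f^{-1}(p)$ (immediate, since each maps isomorphically to $T'$, resp.\ $C$, and $p$ is a single point); then both projections $f^{-1}(p)\to I$ and $f^{-1}(p)\to J$ are bijections and $\varphi$ is their composite. Second, to see that $T'_i\cup C_{\varphi(i)}$ is a copy of $T$, observe that it is a connected component of $X$, hence open and closed, so it is itself finite \'etale over $T$; its degree, computed at a general point of $C$, is $1$, so it maps isomorphically to $T$. With these remarks your proof is complete, and it is the natural elementary route; an alternative would be to quote a van Kampen--type statement for $\pi_1^{\text{\'et}}$ of a scheme glued from simply connected pieces along points with tree-shaped incidence graph, but your induction proves exactly that special case by hand.
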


Let $X$ (resp. $C$) be a nonsingular projective surface (resp. curve) over $\K$. A \textit{fibration} is a surjective morphism $h \colon X \to C$ such that $h_*\O_X = \O_C$. We denote by $X_c$ the fiber $\spec \K \times_C X$ of $h$ over a $\K$-point $c$. If $h^*(c)=\sum a_i \Gamma_i$ is its decomposition into prime divisors $\Gamma_i$, the multiplicity of $X_c$ is defined as gcd$(\{a_i\}_i)$. When gcd$(\{a_i\}_i)>1$, we say that $X_c$ is a \textit{multiple fiber} of $h$.

\begin{lemma}
Let $h \colon X \to C$ be a fibration without multiple fibers. Let $x$ be a $\K$-point of $X$. Then, we have the exact sequence \begin{center} $\pi_1^{\text{\'et}}(X_{h(x)},x) \stackrel{\rm \varphi}
{\rightarrow} \pi_1^{\text{\'et}}(X,x) \stackrel{\rm \psi} {\rightarrow} \pi_1^{\text{\'et}}(C,h(x)) \rightarrow 1$ \end{center} \label{lemma}
\end{lemma}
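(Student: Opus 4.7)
The plan is to mimic the proof of SGA1 Exp.X Cor.1.4 (equivalently, Theorem 6.2.1 of Chapter VI of Mumford's red book), which establishes this exact sequence under the separability hypothesis that every geometric fiber of $h$ is reduced. Inspection of that proof shows that separability is used only in verifying SGA1 Exp.X Prop.1.2: every connected étale Galois cover $f\colon\tilde X\to X$ whose restriction to $X_{h(x)}$ is trivial is of the form $X\times_C\tilde C$ for some étale cover $\nu\colon\tilde C\to C$. I will therefore reprove this descent statement under the weaker hypothesis that $h$ has no multiple fibers; the rest of the Mumford argument then transfers verbatim.

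Surjectivity of $\psi$ and triviality of $\psi\circ\varphi$ require no new input. Surjectivity is formal from $h_*\O_X=\O_C$ via Stein factorization (the pullback to $X$ of a connected étale cover of $C$ remains connected), and the composition is trivial because $X_{h(x)}\to C$ factors through a $\K$-point. For the core exactness at $\pi_1^{\text{\'et}}(X,x)$, given $f\colon\tilde X\to X$ as in the preceding paragraph, I form the Stein factorization $\tilde X\xrightarrow{\tilde h}C'\xrightarrow{\nu}C$ of $h\circ f$; connectedness of $\tilde X$ makes $C'$ a connected nonsingular projective curve, and $\nu$ a finite morphism between such curves. Once $\nu$ is known to be étale, the natural $C'$-morphism $\tilde X\to X\times_C C'$ compares two connected étale covers of $X$ of the same degree (by looking at the fiber over $h(x)$, where both sides split completely) and is therefore an isomorphism, yielding the desired descent.

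The main obstacle, and the only place where the ``no multiple fibers'' hypothesis is essential, is étaleness of $\nu$ at $c'\in C'$ above closed points $c\in C$ whose fiber $X_c=\sum a_i\Gamma_i$ is non-reduced. The failure mode one must rule out is exactly the ``multiple fiber'' pathology: if $m:=\gcd(a_i)>1$, then a cyclic cover $\tilde C\to C$ ramified at $c$ of order $m$ can pull back to an étale cover of $X$ (because the ramification of $\tilde C\to C$ gets absorbed into the multiplicity of $X_c$), producing an étale cover of $X$ trivial on $X_{h(x)}$ but not coming from an étale cover of $C$. Under the hypothesis $\gcd(a_i)=1$, this absorption is impossible: any ramification of $\nu$ of order $m$ at $c'$ would force $m\mid a_i$ for every $i$, hence $m=1$. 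Translated into the Stein factorization setup, the compatibility of Stein factorization with flat base change identifies $|\nu^{-1}(c)|$ with the number of connected components of $\tilde X\times_C\spec\K(c)$; the hypothesis $\gcd(a_i)=1$ forces this count to agree with the generic count of components of the fibers of $\tilde X\to C$ (which equals the degree of $f$ by the trivialization hypothesis), and consequently $\nu$ is unramified, hence étale, at $c'$. With this substitute for SGA1 Prop.1.2 in hand, the Mumford argument yields the claimed exact sequence.
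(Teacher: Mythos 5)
Your proposal follows essentially the same route as the paper's appendix proof: surjectivity of $\psi$ and triviality of $\psi\circ\varphi$ as in Murre/SGA1, then Stein factorization of $h\circ f$ into $\tilde X\to C'\to C$, the pullback identity $f^*h^*(c)=\tilde h^*\nu^*(c)$ combined with $\gcd(a_i)=1$ to rule out ramification (and hence also any inseparable part) of $\nu$, and Murre's final comparison $\tilde X\simeq X\times_C C'$. The only detail you leave implicit --- that a ramification index $m$ at $c'$ divides \emph{every} $a_i$, which needs each Stein fiber over a point of $\nu^{-1}(c)$ to dominate all components of $X_c$ (openness and closedness of the restricted \'etale map plus connectedness of $X_c$) --- is exactly the step the paper itself dismisses as easy, so your argument matches the paper's.
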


\begin{proof}
Following the arguments in \cite[Ch.VI Th.6.3.2.1]{Mu67} the proof is divided in three parts. The third part involves the assumption on multiple fibers.

\textbf{(a)} ($\psi$ is surjective) It is enough to prove that for every connected \'etale covering $f \colon C^{\prime} \to C$ the base change $f^{\prime} \colon C^{\prime} \times_{C} X \to X$ is also a connected \'etale covering. It only uses that $h$ is flat and proper, the proof is identical to \cite[Ch.VI Th.6.3.2.1(a)]{Mu67}, so we omit it.


\textbf{(b)}($\psi \circ \phi$ is trivial) Same as \cite[Ch.VI Th.6.3.2.1(b)]{Mu67}.


\textbf{(c)}($\textrm{Ker} \, \psi \subset \textrm{Im} \, \phi$) Now we need to prove that for any connected \'etale covering $f \colon X^{\prime} \rightarrow X$ such that $X_{h(x)}^{\prime}= X_{h(x)} \times_{X} X^{\prime} \to X_{h(x)}$ has a section over $X_{h(x)}$, there exist a connected \'etale covering $C^{\prime} \to C$ such that the natural map $X^{\prime} \to C^{\prime} \times_{C} X$ is an isomorphism.

We have that $h \circ f$ is proper, and so we can apply the Stein factorization (see \cite[\S 6.2]{Mu67}) $$\xymatrix{ X^{\prime} \ar[r]^{f} \ar[d]_{h^{\prime}} & X \ar[d]_{h} \\ C^{\prime} \ar[r]_{f^{\prime}}& C}$$ where $h^{\prime}$ is a fibration, and $f^{\prime}$ is a finite morphism. The key will be to prove that $f^{\prime}$ is a connected \'etale covering.

Because $f$ is a connected \'etale morphism and $X$ is a nonsingular projective surface, $X^{\prime}$ is also a nonsingular projective surface (see \cite[Prop. 5.2.12]{Sza09}). Furthermore, it is easy to see that $C^{\prime}$ is a nonsingular projective curve. We also have that $f^{\prime}$ is flat, and the extension of fields of rational functions $K(C^{\prime})/K(C)$ is separable. On the contrary, it would have a purely inseparable part, and this would correspond to a factorization of $f^{\prime} \colon C^{\prime} \to C$ into a separable and purely inseparable part, the latter a composition of $\K$-linear Frobenius morphisms; see \cite[Ch.IV \S2]{Hart77}. Then this morphism $f^{\prime}$ would be ramified everywhere, but that would give multiple fibers to $h^{\prime}$. This contradicts the assumption on fibers for $h$, since $f$ is an \'etale covering, and so unramified. This simple observation is the key to prove that the map $f^{\prime}$ is actually unramified, and so \'etale.

Let us suppose the contrary, so there exists a point $c \in C$ with ramification, hence $(f^{\prime})^{*}(c)= \sum_{i} n_{i} p_{i}$
for some points $p_{i} \in C^{\prime}$, and suppose $n_{1}>1$. We have $ h^{*}(c)= \sum_{i=1}^{t} m_{i} \Gamma_{i}$ with gcd$(m_1,\ldots,m_t)=1$ by assumption, and we write $$ f^{*}(h^{*}(c))= \sum_{i=1}^{t} m_{i} \Big( \sum_{j=1}^{k(i)} \Gamma_{i,j} \Big)$$ where $\Gamma_{i,j}$ are all the distinct curves in $f^{*}(\Gamma_{i})$.

Let us write $(h^{\prime})^{*}(p_{1}) = \sum_{l} a_{1,l} \Gamma^{\prime}_{1,l}$ for some $a_{1,l} \in \mathbb{Z}_{>0}$, and some curves $\Gamma^{\prime}_{1,l}$. Now, by using that $h^{-1}(c)$ is connected with finitely many components, one can easily prove that for every index $i\in \{1,2,\ldots,t \}$, there exists $l$ such that $\Gamma^{\prime}_{1,l} \in f^{*}(\Gamma_{i})$. Then the commutative diagram above implies that $n_1$ divides $m_i$ for every $i$, which is not possible. Therefore $f^{\prime}$ is unramified, hence a connected \'etale covering.

Finally, it remains to prove that $X^{\prime} \simeq C^{\prime} \times_{C} X$ by the natural map, under the assumption that $X_{h(x)}^{\prime} \to X_{h(x)}$ has a section over $X_{h(x)}$. That is done in \cite[Ch.VI Th.6.3.2.1(c)]{Mu67} without applying the hypothesis (they have) on separability of the morphism.

\end{proof}

The assumption ``fibration without multiple fibers" is the minimum requirement to have the exact homotopy sequence of Lemma \ref{lemma}. Indeed, one can construct fibrations with multiple fibers where the sequence is not exact. See \cite[\S 1]{Xiao91} for examples over $\C$ for $\pi_1^{\text{top}}$. In that case, to have an exact homotopy sequence, one has to replace $\pi_1^{\text{top}}$ of the base curve by the $\pi_1^{\text{top}}$ of a suitable orbifold, depending on the multiple fibers. Then, to have an \'etale example, one can use that the profinite completion of $\pi_1^{\text{top}}$ is $\pi_1^{\text{\'et}}$.

\begin{corollary}
Let $h \colon X \rightarrow C$ be a fibration which has a section and a simply connected fiber, and let $x \in X$ be a $\K$-point. Then $$\pi_1^{\text{\'et}}(X,x) \simeq \pi_1^{\text{\'et}}(C,h(x)).$$ \label{corlemma}
\end{corollary}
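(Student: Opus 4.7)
The plan is to apply Lemma \ref{lemma} with a carefully chosen basepoint and then transport the conclusion to an arbitrary basepoint. Two hypotheses of the corollary enable this: the section removes multiple fibers (which is what lets us invoke Lemma \ref{lemma} at all), while the simply connected fiber kills the left-hand term of the homotopy sequence.

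First I would verify that $h$ has no multiple fibers. Let $c \in C$ and let $t$ be a uniformizer of $\O_{C,c}$. Since $s \colon C \to X$ is a section, $s^{*} h^{*}(t) = t$, so the image of $h^{*}(t)$ in $\O_{X, s(c)}$ does not lie in $\mathfrak{m}_{s(c)}^{2}$. Hence, in the decomposition $h^{*}(c) = \sum_i a_i \Gamma_i$, the unique component $\Gamma_i$ passing through $s(c)$ appears with multiplicity $a_i = 1$, so $\gcd(a_i) = 1$ for every fiber. Thus the hypothesis of Lemma \ref{lemma} is satisfied.

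Next I would pick a convenient basepoint. Let $c_0 \in C$ be a $\K$-point such that $X_{c_0}$ is simply connected and set $x_0 := s(c_0)$; then $h(x_0) = c_0$ and $x_0 \in X_{c_0}$. Applying Lemma \ref{lemma} at $x_0$ yields the exact sequence
\[
\pi_1^{\text{\'et}}(X_{c_0}, x_0) \xrightarrow{\varphi} \pi_1^{\text{\'et}}(X, x_0) \xrightarrow{\psi} \pi_1^{\text{\'et}}(C, c_0) \to 1.
\]
The left-hand group vanishes by hypothesis, so $\varphi$ is trivial and $\psi$ is injective. Combined with the surjectivity of $\psi$, this gives an isomorphism $h_{*,x_0} \colon \pi_1^{\text{\'et}}(X, x_0) \xrightarrow{\sim} \pi_1^{\text{\'et}}(C, c_0)$.

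Finally, I would pass from $x_0$ to an arbitrary $\K$-point $x$ by basepoint change. Since $X$ and $C$ are connected, any choice of \'etale paths produces (non-canonical) isomorphisms $\alpha \colon \pi_1^{\text{\'et}}(X, x) \to \pi_1^{\text{\'et}}(X, x_0)$ and $\beta \colon \pi_1^{\text{\'et}}(C, h(x)) \to \pi_1^{\text{\'et}}(C, c_0)$, and functoriality of $h_{*}$ under basepoint change gives the commutation $\beta \circ h_{*,x} = h_{*,x_0} \circ \alpha$. Since the right-hand side is an isomorphism, so is $h_{*,x}$. I do not expect a real obstacle here; the whole corollary is a formal consequence of Lemma \ref{lemma} once one observes that a section precludes multiple fibers and lets us place the basepoint in the simply connected fiber.
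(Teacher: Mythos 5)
Your proposal is correct and follows the paper's own route: the paper likewise observes that the section rules out multiple fibers and then deduces the isomorphism directly from Lemma \ref{lemma}, with your basepoint placement in the simply connected fiber and the basepoint-change step simply making explicit what the paper leaves implicit.
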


\begin{proof}
The section implies no multiple fibers. Hence, the isomorphism is a trivial consequence of the preceding lemma.
\end{proof}


\begin{thebibliography}{99}

\bibitem[BHPV04]{BHPV04}
    W. P. Barth, K. Hulek, C. A. M. Peters, and A. Van de Ven.
    \emph{Compact complex surfaces},
    Ergebnisse der Mathematik und ihrer Grenzgebiete. 3. Folge., second edition, vol. 4, Springer-Verlag, Berlin, 2004.

\bibitem[BHH87]{BHH87}
    G. Barthel, F. Hirzebruch, and T. H\"ofer,
    \emph{Geradenkonfigurationen und Algebraische Fl\"achen},
    Aspects of Mathematics, D4. Friedr. Vieweg \& Sohn, Braunschweig, 1987.

\bibitem[B01]{B01}
    I. Bouw,
    \emph{The p-rank of ramified covers of curves},
    Compositio Math. 126(2001), no.3, 295-–322.

\bibitem[Co14]{Co14}
    R. Codorniu,
    \emph{Superficies fibradas \'etale simplemente conexas},
    Master's thesis, Pontificia Universidad Cat\'olica de Chile, June 2014.

\bibitem[Eas08]{Eas08}
    R. Easton,
    \emph{Surfaces violating Bogomolov-Miyaoka-Yau in positive characteristic},
    Proc. Amer. Math. Soc. 136, 2271--2278 (2008).

\bibitem[EV92]{EV92}
    H. Esnault, and E. Viehweg,
    \emph{Lectures on vanishing theorems},
    DMV Seminar, vol. 20, Birkh\"auser Verlag, Basel, 1992.

\bibitem[Gi03]{Gi03}
    K. Girstmair,
    \emph{Zones of large and small values for {D}edekind sums},
    Acta Arith. 109 (2003), no. 3, 299--308.

\bibitem[Gi06]{Gi06}
    K. Girstmair,
    \emph{Continued fractions and {D}edekind sums: three-term relations and distribution},
    J. Number Theory 119 (2006), no. 1, 66--85.

\bibitem[Gro71]{Gro71}
    A. Grothendieck,
    \emph{Rev\^etements \'etales et groupe fondamental},
    S{\'e}minaire de G{\'e}om{\'e}trie Alg{\'e}brique du Bois Marie 1960--1961 (SGA 1), Dirig{\'e} par Alexandre Grothendieck. Augment{\'e} de deux expos{\'e}s de M. Raynaud, Lecture Notes in Mathematics, Vol. 224, Springer-Verlag, Berlin, 1971, xxii+447.

\bibitem[Hart77]{Hart77}
    R. Hartshorne,
    \emph{Algebraic geometry},
    GTM 52, Springer-Verlag, 1977.

\bibitem[HiZa74]{HiZa74}
    F. Hirzebruch, and D. Zagier,
    \emph{The {A}tiyah-{S}inger theorem and elementary number theory},
    Mathematics Lecture Series, No.3, Publish or Perish Inc., Boston, Mass., 1974.

\bibitem[Ja10]{Ja10}
    J. Jang,
    \emph{Generically ordinary fibrations and a counterexample to Parshin's conjecture},
    Michigan Math. J. 59(2010), 169--178.

\bibitem[Ja10b]{Ja10b}
    J. Jang,
    \emph{Semi-stable fibrations of generic p-rank 0},
    Math. Z. 264(2010), 271--277.

\bibitem[Jo14]{Jo14}
    K. Joshi,
    \emph{Crystalline aspects of geography of low dimensional varieties: Numerology},
    arXiv:1403.6402v1, pre-print March 2014.

\bibitem[KM98]{KM1998}
    J. Koll\'ar, and S. Mori,
    \emph{Birational geometry of algebraic varieties},
    CTM 134 (1998).

\bibitem[Ko96]{Ko96}
    J. Koll\'ar,
    \emph{Rational curves on algebraic varieties},
    Ergebnisse der Mathematik und ihrer Grenzgebiete. 3. Folge. A Series of Modern Surveys in Mathematics, v.32, Springer-Verlag, Berlin, 1996.

\bibitem[Lan14]{Lan14}
    A. Langer,
    \emph{Bogomolov's inequality for Higgs sheaves in positive characteristic},
    Invent. Math. 199(2015), no. 3, 889-–920.

\bibitem[Li13]{Li13}
    C. Liedtke,
    \emph{Algebraic surfaces in positive characteristic},
    Birational geometry, rational curves, and arithmetic, 229--292, Springer, New York (2013).

\bibitem[Mit15]{Mit15}
    K. Mitsui,
    \emph{Homotopy exact sequences and orbifolds},
    Algebra Number Theory 9(2015), no. 5, 1089-–1136.

\bibitem[Mum67]{Mum67}
    D. Mumford,
    \emph{Pathologies III},
    Amer. J. Math. 89(1967), 94–-104.

\bibitem[MOagII]{MOagII}
    D. Mumford, T. Oda,
    \emph{Algebraic Geometry II},
    In Mumford's web page: www.dam.brown.edu$/$people$/$mumford/alg$-$geom$/$papers$/$AGII.pdf.

\bibitem[Mus05]{Mus05}
    M. Musta\c{t}\u{a},
    \emph{Toric Varieties},
    available in his web page, 2005.

\bibitem[Mu67]{Mu67}
    J. P. Murre,
    \emph{Lectures on an introduction to {G}rothendieck's theory of the fundamental group},
    Notes by S. Anantharaman, Tata Institute of Fundamental Research Lectures on Mathmatics, No 40, Tata Institute of Fundamental Research, Bombay, {1967}, {iv+176+iv}.

\bibitem[Nori83]{Nori83}
    M. V. Nori,
    \emph{Zariski's conjecture and related problems},
    Ann. Sci. École Norm. Sup. (4) 16(1983), no. 2, 305–-344.

\bibitem[P87]{P87}
    A. N. Parshin,
    \emph{The {B}ogomolov-{M}iyaoka-{Y}au inequality for the arithmetical surfaces and its applications},
    S\'eminaire de {T}h\'eorie des {N}ombres, {P}aris 1986--87, 299--312, Progr. Math., 75, Birkh\"auser Boston, Boston, MA, 1988.

\bibitem[P89]{P89}
    A. N. Parshin,
    \emph{Letter to Don Zagier by A. N. Parshin},
    Arithmetic algebraic geometry (Texel, 1989), 285--292, Progr. Math., 89, Birkh\"auser Boston, Boston, MA, 1991.

\bibitem[Per87]{Per87}
    U. Persson,
    \emph{An introduction to the geography of surfaces of general type},
    Algebraic geometry, {B}owdoin, 1985 ({B}runswick, {M}aine, 1985), Proc. Sympos. Pure Math., 46(1987), 195--218.

\bibitem[Ra14]{Ra14}
    J. Rana,
    \emph{Boundary divisors in the moduli space of stable quintic surfaces},
    arXiv:1407.7148.

\bibitem[Ray94]{Ray94}
    M. Raynaud,
    \emph{Mauvaise r\'eduction des courbes et p-rang},
    C. R. Acad. Sci. Paris S\'er. I Math. 319(1994), no. 12, 1279-–1282.

\bibitem[RU14]{RU14}
    X. Roulleau, and G. Urz\'ua,
    \emph{Chern slopes of simply connected complex surfaces of general type are dense in [2,3]},
     Ann. of Math. 182(2015), 287--306.

\bibitem[Se58b]{Se58b}
    J.-P. Serre,
    \emph{Espaces fibr\'es Alg\'ebriques},
    S\'eminaire C. Chevalley, E. N. S., 1958, 1--37.

\bibitem[Se58]{Se58}
    J.-P. Serre,
    \emph{Sur la topologie des vari\'et\'es alg\'ebriques en caract\'eristique p},
    Symposium internacional de topolog\'ia pp. 24-–53, Universidad Nacional Aut\'onoma de M\'exico, UNESCO, Mexico City, 1958.

\bibitem[SB91]{SB91}
    N. I. Shepherd-Barron,
    \emph{Geography for surfaces of general type in positive characteristic},
    Invent. math. 106(1991), 263--274.

\bibitem[ShaI94]{ShaI94} I. Shafarevich,
    \emph{Basic algebraic geometry. 1. Varieties in projective space},
    Second edition. Translated from the 1988 Russian edition by Miles Reid. Springer-Verlag, Berlin, 1994. xiv+269 pp.

\bibitem[Sza09]{Sza09}
    T. Szamuely,
    \emph{Galois groups and fundamental groups},
    Cambridge Studies in Advanced Mathematics, 117. Cambridge University Press, Cambridge, 2009. x+270.

\bibitem[Sz79]{Sz79}
    L. Szpiro,
    \emph{Sur le th\'eor\`em de rigidit\'e de Parshin et Arakelov},
    Ast\'erisque 64, Soc. Math. France 1979.


\bibitem[Urz08]{Urz08}
    G. Urz\'ua,
    \emph{Arrangements of curves and algebraic surfaces},
    Ph.D. Thesis, University of Michigan (2008). Available at http://deepblue.lib.umich.edu/.

\bibitem[Urz10]{Urz10}
    G. Urz\'ua,
    \emph{Arrangements of curves and algebraic surfaces},
    J. of Algebraic Geom. 19(2010), 335--365.

\bibitem[Urz11]{Urz11}
    G. Urz\'ua,
    \emph{Arrangements of rational sections over curves and the varieties they define},
    Rendiconti Lincei: Matematica e Applicazioni 22(2011), 453--486.

\bibitem[Xiao]{Xiao91}
    G. Xiao,
    \emph{{$\pi\sb 1$} of elliptic and hyperelliptic surfaces},
    Internat. J. Math. 2(1991), no.5, 599--615.


\end{thebibliography}
\end{document}